 \newtheorem{thm}{Theorem}[section]
 \newtheorem{cor}[thm]{Corollary}
 \newtheorem{lem}[thm]{Lemma}
 \newtheorem{prop}[thm]{Proposition}
 \theoremstyle{definition}
 \newtheorem{defn}[thm]{Definition}
 \newtheorem{ex}[thm]{Example}
 \theoremstyle{remark}
 \newtheorem{rem}[thm]{Remark}
 \numberwithin{equation}{section}
\begin{document}
%
%
%
%
%
%
%
%
%

\title[Weaving Hilbert space fusion  frames]
 {Weaving Hilbert Space Fusion Frames}


\author[F. Arabyani Neyshaburi]{Fahimeh Arabyani Neyshaburi}
\address{Department of Mathematics and Computer Sciences, Hakim Sabzevari University, Sabzevar, Iran.}
\email{fahimeh.arabyani@gmail.com}
\author[A. Arefijamaal]{Ali Akbar Arefijamaal}
\address{Department of Mathematics and Computer Sciences, Hakim Sabzevari University, Sabzevar, Iran.}
\email{arefijamaal@hsu.ac.ir}

\subjclass{42C15
}
\keywords{Hilbert space frames, fusion frames, woven fusion frames, fusion Riesz bases.}

\begin{abstract}
A new notion in frame theory, so called  weaving  frames has  been recently introduced   to deal with some problems in signal processing and wireless sensor networks. Also, fusion frames are an important extension of frames, used in many areas especially for wireless sensor networks.
In this paper,  we survey the notion of weaving Hilbert space fusion frames. This concept can be had  potential applications in wireless  sensor networks which require distributed processing using different fusion frames. Indeed, we present several approaches for identifying and constructing of weaving fusion frames in terms of local frames,  bounded operators in   Hilbert spaces and also dual fusion frames.
To this  end, we present some conditions under  which a fusion frame with its duals constitute some pair of woven fusion frames. As a result, we show that Riesz fusion bases are woven with all of their duals. Finally,  we obtain some new results  on  fusion frames and weaving fusion frames under operator perturbations.
\end{abstract}

\maketitle

\section{Introduction and preliminaries}

\smallskip

Fusion frame theory is a fundamental mathematical theory   introduced in  \cite{Cas04} to model sensor networks perfectly.
Although, recent studies shows that fusion frames provide effective frameworks  not only for modeling of sensor networks but also   for signal and image processing, sampling theory, filter banks and a variety of applications that cannot be modeled by discrete frames \cite{Cas08, sensor, hear}. In the following, we review basic definitions and  results of fusion frames.

 Let $\{W_i\}_{i\in I}$ be a family of closed subspaces of $\mathcal{H}$ and $\{\omega_i\}_{i\in I}$ a family of
weights, i.e. $\omega_i>0$, $i\in I$. Then $W:=\{(W_i,\omega_i)\}_{i\in
I}$ is called a \textit{fusion frame} for $\mathcal{H}$ if there exist the
constants $0<C_{W}\leq D_{W}<\infty$ such that
\begin{eqnarray}\label{Def. fusion}
C_{W}\|f\|^{2}\leq \sum_{i\in I}\omega_i^2\|\pi_{W_i}f\|^2\leq
D_{W}\|f\|^{2},\qquad (f\in \mathcal{H}),
\end{eqnarray}
where $\pi_{W_{i}}$ denotes the orthogonal projection from Hilbert space $\mathcal{H}$ onto a closed subspace $W_{i}$. The constants $C_{W}$ and $D_{W}$ are called the \textit{fusion frame
bounds}. If $W$ is a fusion frame for $\overline{\operatorname{span}}\{W_{i}\}_{i\in I}$ then $W$ is called a \textit{fusion frame sequence} and
if $\omega_i=1$, for all $i$, $W$ is called $1$-uniform fusion frame and we denote it by $\lbrace
W_{i}\rbrace_{i\in I}$. Also, if we only have the upper bound in (\ref{Def. fusion}) we
call $\{(W_i,\omega_i)\}_{i\in I}$ a \textit{Bessel fusion
sequence}.
A family of closed subspaces $\{W_i\}_{i\in I}$ is called an orthonormal basis for $\mathcal{H}$ when $\oplus_{i\in I} W_{i} = \mathcal{H}$, and it is  called a \textit{fusion Riesz sequence} whenever  there exist positive constants $C_{W}$, $D_{W}$ such that for every finite subset $J\subset I$ and arbitrary vector $f_{i}\in W_i$, we have
\begin{eqnarray*}\label{Def fusion}
C_{W}\sum_{i\in J}\| f_i\|^{2} \leq \| \sum_{i\in J} f_i\|^{2} \leq D_{W}\sum_{i\in J}\| f_i\|^{2},
\end{eqnarray*}
moreover, if $W$ is a complete family  in  $\mathcal{H}$ it is called a \textit{fusion Riesz basis}.
It is clear that every  orthonormal basis of subspaces and also every  fusion Riesz basis is a    $1$-uniform fusion frame for $\mathcal{H}$.
Recall that for each sequence $\{W_i\}_{i\in I}$ of closed subspaces
in $\mathcal{H}$, the space
\begin{eqnarray*}
\sum_{i\in I}\oplus W_{i} =\left\{\{f_i\}_{i\in I}:f_i\in W_i,
\sum_{i\in I}\|f_i\|^2<\infty\right\},
\end{eqnarray*}
 with the inner product $\left\langle \{f_i\}_{i\in I},\{g_i\}_{i\in I} \right\rangle=\sum_{i\in
I}\langle f_i,g_i \rangle$ is a Hilbert space.

For a Bessel fusion sequence $W := \{(W_i,\omega_i)\}_{i\in I}$ of
$\mathcal{H}$, the \textit{synthesis operator} $T_{W}: \sum_{i\in
I}\oplus W_{i} \rightarrow\mathcal{H}$ is defined by
\begin{equation*}
T_{W}\left(\{f_i\}_{i\in I}\right)=\sum_{i\in I}\omega_if_i,\qquad
\left(\{f_{i}\}_{i\in I}\in \sum_{i\in I}\oplus W_{i}\right).
\end{equation*}
Its adjoint operator $T_{W}^{*}: \mathcal{H}\rightarrow \sum_{i\in
I}\oplus W_{i}$, which is called the \textit{analysis
operator}, is given by
\begin{eqnarray*}
T_{W}^{*}(f)=\left\{\omega_{i}\pi_{W_{i}}(f)\right\}_{i\in I},\qquad (f\in
\mathcal{H}).\end{eqnarray*}
and the \textit{fusion frame operator}
$S_{W}:\mathcal{H}\rightarrow\mathcal{H}$ is defined by $S_{W
}f=\sum_{i\in I}\omega_i^{2}\pi_{W_i}f$, which is a bounded,
invertible and positive operator \cite{Cas04}.
For each $i\in I$, let $W_{i}$ be a closed subspace of $\mathcal{H}$ and $\omega_{i}>0$. Also,  let $\lbrace f_{i,j}\rbrace_{j\in J_{i}}$ be a frame for $W_{i}$ with frame bounds $\alpha_{i}$ and $\beta_{i}$ such that
\begin{eqnarray}\label{sup}
 0 < \alpha=inf_{i\in I} \alpha_{i} \leq \beta=sup_{i\in I}\beta_{i} < \infty.
\end{eqnarray}
For every fusion frame as $\lbrace (W_{i},\omega_{i})\rbrace_{i\in I}$, there exist frames $\lbrace f_{i,j}\rbrace_{j\in J_{i}}$ for $W_{i}$, so that satisfies (\ref{sup}). These frames are called the \textit{local frames} of $\lbrace (W_{i},\omega_{i})\rbrace_{i\in I}$ and it is well known that $\lbrace (W_{i},\omega_{i})\rbrace_{i\in I}$ is a fusion frame if and only if $\lbrace \omega_{i}f_{i,j}\rbrace_{j\in J_{i}, i\in I}$ is a frame for $\mathcal{H}$,  see Theorem 3.2 of \cite{Cas04}.

First definition of dual fusion frames  was presented by P.
G$\breve{\textrm{a}}$vru\c{t}a in \cite{Gav07}.
A Bessel fusion sequence $\{(V_i,\nu_i)\}_{i\in I}$ is
called a \textit{dual} fusion frame of $W=\{(W_i,\omega_i)\}_{i\in I}$ if
\begin{eqnarray}\label{Def:alt}
f=\sum_{i\in I}\omega_{i}\nu_{i}\pi_{V_i}S_{W}^{-1}\pi_{W_i}f,\qquad
(f\in \mathcal{H}).
\end{eqnarray}
The family $\{(S_{W}^{-1}W_i,\omega_i)\}_{i\in I}$, which is a dual
fusion frame of $W$, is called the \textit{canonical dual} of
$\{(W_i,\omega_i)\}_{i\in I}$.  Since the explicit computations of dual fusion frames is intricate, the authors in \cite{arefi1} introduced and investigated  the notion of approximate duals for fusion frames to obtain some characterizations of dual fusion frames. Indeed, a Bessel fusion sequence $\lbrace (V_{i}, \upsilon_{i})\rbrace_{i\in I}$ is called an approximate  dual of a  fusion frame $W=\lbrace (W_{i}, \omega_{i})\rbrace_{i\in I}$ if
$\Vert f- \sum_{i\in I}\omega_{i}\nu_{i}\pi_{V_i}S_{W}^{-1}\pi_{W_i}f\Vert < 1$, for every $f\in \mathcal{H}$, see \cite{arefi1, Gav07, Hei15, Hei14} for more details on dual and approximate dual fusion frames.

\smallskip

Recently,
  Bemrose et al.  \cite{Bemros} have introduced the notion of \textit{weaving Hilbert space frames} due to some new  problems  arising in distributed  signal processing and wireless sensor networks.
In the sequel,  we state some preliminaries of weaving frames that are used in our main results.

\begin{defn}
A finite family of frames $\{\phi_{ij}\}_{j=1,i\in I}^{M}$ in Hilbert space $\mathcal{H}$ is said  to be woven if there are universal constants $A$ and $B$ so that for every partition $\{\sigma_{j}\}_{j=1}^{M}$ of $I$, the family   $\{\phi_{ij}\}_{j=1,i\in \sigma_{j}}^{M}$ is a frame for $\mathcal{H}$ with bounds $A$ and $B$, respectively. Each family $\{\phi_{ij}\}_{j=1,i\in \sigma_{j}}^{M}$ is called a weaving.
\end{defn}
Moreover the family $\{\phi_{ij}\}_{j=1,i\in I}^{M}$ is called weakly woven if for every partition  $\{\sigma_{j}\}_{j=1}^{M}$ of $I$, the family $\{\phi_{ij}\}_{j=1,i\in \sigma_{j}}^{M}$ is a frame for $\mathcal{H}$.
\begin{thm}\label{2.0}\cite{Bemros}
given two frames  $\{\varphi_{i}\}_{i\in I}$ and $\{\psi_{i}\}_{i\in I}$ for $\mathcal{H}$, the following are equivalent:
\begin{itemize}
\item[(i)] The two frames are woven.
\item[(ii)] The two frames are weakly woven.
\end{itemize}
 \end{thm}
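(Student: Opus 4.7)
The implication (i)$\Rightarrow$(ii) is immediate from the definition. For the converse, the uniform upper bound is automatic: every weaving is contained in the Bessel family $\{\varphi_{i}\}_{i\in I}\cup\{\psi_{i}\}_{i\in I}$, so $D_{\varphi}+D_{\psi}$ is a universal Bessel bound. The theorem therefore reduces to producing a \emph{uniform lower} bound, which I would argue by contradiction. Assuming the pair is weakly woven but not woven, for every $n$ I choose a partition $(\sigma_{1}^{n},\sigma_{2}^{n})$ of $I$ and a unit vector $f_{n}\in\mathcal{H}$ with
\[
\sum_{i\in\sigma_{1}^{n}}|\langle f_{n},\varphi_{i}\rangle|^{2}+\sum_{i\in\sigma_{2}^{n}}|\langle f_{n},\psi_{i}\rangle|^{2}<\frac{1}{n}.
\]
By Banach--Alaoglu a subsequence has $f_{n}\to f$ weakly with $\|f\|\le 1$; by Tychonoff compactness of $\{0,1\}^{I}$ (a diagonal extraction) a further subsequence makes $\chi_{\sigma_{1}^{n}}(i)$ eventually constant for every $i\in I$, yielding a limit partition $(\sigma_{1},\sigma_{2})$.

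If $f\neq 0$, I would finish quickly. For $i\in\sigma_{1}$ one has $i\in\sigma_{1}^{n}$ eventually, so $|\langle f_{n},\varphi_{i}\rangle|^{2}\le 1/n\to 0$; weak convergence then gives $\langle f,\varphi_{i}\rangle=0$, and symmetrically $\langle f,\psi_{i}\rangle=0$ for $i\in\sigma_{2}$. Consequently the weaving sum of $f$ under $(\sigma_{1},\sigma_{2})$ vanishes while $f\neq 0$, contradicting the frame property of that weaving granted by hypothesis (ii).

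The main obstacle is the case $f=0$, where the limit partition just constructed gives no contradiction on its own. Here I plan a gliding-hump construction. The analysis sequences $\{\langle f_{n},\varphi_{i}\rangle\}_{i}$ and $\{\langle f_{n},\psi_{i}\rangle\}_{i}$ are bounded in $\ell^{2}(I)$ and converge coordinatewise to $0$, so I can inductively choose $n_{k}\uparrow\infty$ with $1/n_{k}<2^{-k}$ and nested finite sets $F_{0}\subset F_{1}\subset\cdots$ with $\bigcup_{k}F_{k}=I$ satisfying
\[
\sum_{i\in F_{k-1}\cup(I\setminus F_{k})}\bigl(|\langle f_{n_{k}},\varphi_{i}\rangle|^{2}+|\langle f_{n_{k}},\psi_{i}\rangle|^{2}\bigr)<2^{-k}.
\]
Setting $\widetilde{\sigma}_{1}:=\bigcup_{k}\bigl(\sigma_{1}^{n_{k}}\cap(F_{k}\setminus F_{k-1})\bigr)$ and $\widetilde{\sigma}_{2}:=I\setminus\widetilde{\sigma}_{1}$, I split the $\widetilde{\sigma}$-weaving sum of $f_{n_{k}}$ into contributions from $F_{k-1}$, from $F_{k}\setminus F_{k-1}$, and from $I\setminus F_{k}$. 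The outer two pieces are dominated by $2^{-k}$ thanks to the concentration estimate, while on the middle block $\widetilde{\sigma}$ coincides with $\sigma^{n_{k}}$ so that piece is bounded by the original $1/n_{k}<2^{-k}$. The unit vectors $f_{n_{k}}$ therefore drive the $\widetilde{\sigma}$-weaving sum to zero, contradicting weakly woven and completing the proof. The gliding hump is the only genuinely delicate ingredient; everything else is standard weak-compactness bookkeeping.
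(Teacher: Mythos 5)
The paper does not prove this statement at all---it is quoted verbatim from \cite{Bemros}---so there is no in-paper proof to compare against. Your argument is correct and is essentially the argument of Bemrose et al.: the uniform upper bound is automatic, the lower bound is obtained by contradiction via a sequence of bad partitions and unit vectors, a weak limit plus a pointwise-convergent subsequence of indicator functions handles the case of a nonzero weak limit, and the gliding-hump patching of the partitions $\sigma^{n_k}$ along the annuli $F_k\setminus F_{k-1}$ handles the case $f=0$ (just take care to choose $n_k$ after $F_{k-1}$ and $F_k$ after $n_k$, and to arrange $\bigcup_k F_k=I$). No gaps.
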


\begin{prop}\label{prop1.4}
Let $\varphi=\{\varphi_{i}\}_{i\in I}$ be a frame and $U$ be an invertible operator satisfying
$\Vert I_{\mathcal{H}} - U\Vert  < \dfrac{A_{\varphi}}{B_{\varphi}}$.
Then $\varphi$ and $U\varphi$ are woven.
 \end{prop}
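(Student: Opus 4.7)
The plan is to apply Theorem \ref{2.0}, so it suffices to verify that for every subset $\sigma\subset I$ the weaving $\{\varphi_{i}\}_{i\in\sigma}\cup\{U\varphi_{i}\}_{i\in\sigma^{c}}$ is a frame, and in fact to extract bounds independent of $\sigma$. The Bessel direction is immediate: since $\{\varphi_{i}\}_{i\in I}$ is a Bessel sequence with bound $B_{\varphi}$ and $U^{*}$ is bounded, one has
\[
\sum_{i\in\sigma}|\langle f,\varphi_{i}\rangle|^{2}+\sum_{i\in\sigma^{c}}|\langle f,U\varphi_{i}\rangle|^{2}\le B_{\varphi}\|f\|^{2}+B_{\varphi}\|U^{*}f\|^{2}\le B_{\varphi}(1+\|U\|^{2})\|f\|^{2},
\]
giving a uniform upper weaving bound.

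For the lower bound, I would rewrite $U^{*}f=f-(I-U)^{*}f$ and exploit the identity $\langle f,U\varphi_{i}\rangle=\langle U^{*}f,\varphi_{i}\rangle$. The key analytic tool is the elementary Cauchy--Schwarz-type inequality: for every $\varepsilon>0$ and all scalars $a,b$,
\[
|a-b|^{2}\ge \frac{1}{1+\varepsilon}|a|^{2}-\frac{1}{\varepsilon}|b|^{2},
\]
which I would apply with $a=\langle f,\varphi_{i}\rangle$ and $b=\langle(I-U)^{*}f,\varphi_{i}\rangle$. Summing over $i\in\sigma^{c}$ and adding back $\sum_{i\in\sigma}|\langle f,\varphi_{i}\rangle|^{2}$, then enlarging the first sum to $I$, yields
\[
\sum_{i\in\sigma}|\langle f,\varphi_{i}\rangle|^{2}+\sum_{i\in\sigma^{c}}|\langle f,U\varphi_{i}\rangle|^{2}\ge \frac{1}{1+\varepsilon}\sum_{i\in I}|\langle f,\varphi_{i}\rangle|^{2}-\frac{1}{\varepsilon}\sum_{i\in I}|\langle(I-U)^{*}f,\varphi_{i}\rangle|^{2},
\]
and applying the frame bounds of $\varphi$ to both sides gives the estimate
\[
\ge\left(\frac{A_{\varphi}}{1+\varepsilon}-\frac{B_{\varphi}\|I-U\|^{2}}{\varepsilon}\right)\|f\|^{2}.
\]

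The final step is to check that the constant in parentheses can be made strictly positive. This reduces to the inequality $\varepsilon A_{\varphi}>(1+\varepsilon)B_{\varphi}\|I-U\|^{2}$, which admits a solution $\varepsilon>0$ precisely when $B_{\varphi}\|I-U\|^{2}<A_{\varphi}$; since $A_{\varphi}\le B_{\varphi}$ the hypothesis $\|I-U\|<A_{\varphi}/B_{\varphi}$ implies $\|I-U\|^{2}<(A_{\varphi}/B_{\varphi})^{2}\le A_{\varphi}/B_{\varphi}$, and the required $\varepsilon$ exists. The main (minor) obstacle is picking the right scalar inequality and verifying the slack in the hypothesis; note in particular that $A_{\varphi}/B_{\varphi}\le 1$ is what makes the bound $\|I-U\|<A_{\varphi}/B_{\varphi}$ strong enough, after squaring, to dominate $A_{\varphi}/B_{\varphi}$. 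With any such $\varepsilon$ fixed once and for all, the lower bound is independent of $\sigma$, so $\varphi$ and $U\varphi$ are woven.
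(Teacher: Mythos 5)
The paper states this proposition without proof (it is imported from \cite{Bemros, lynch}), so there is no in-paper argument to compare against; your proof is correct and is essentially the standard perturbation argument for woven frames. The inequality $|a-b|^{2}\ge\tfrac{1}{1+\varepsilon}|a|^{2}-\tfrac{1}{\varepsilon}|b|^{2}$ is a correct form of the Peter--Paul inequality, and the one delicate point --- that the hypothesis $\Vert I_{\mathcal{H}}-U\Vert<A_{\varphi}/B_{\varphi}\le 1$ forces $\Vert I_{\mathcal{H}}-U\Vert^{2}<A_{\varphi}/B_{\varphi}$, so that a suitable $\varepsilon>0$ exists and gives a $\sigma$-independent positive lower bound --- is handled correctly; the appeal to Theorem \ref{2.0} is superfluous, since you exhibit uniform weaving bounds directly.
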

For more details on weaving frames and some generalization of weaving frame theory see \cite{Bemros, lynch, Deepshika0, Deepshika, Vashisht}. In this note, we present several approaches for identifying and constructing of weaving fusion frames in terms of local frames,  bounded operators in   Hilbert spaces and specially dual fusion frames. Moreover, we give  some new results  on  fusion frames and weaving fusion frames under operator perturbations.

Throughout the paper, we suppose  $\mathcal{H}$ is a separable Hilbert space, $I$ a countable index set and $I_{\mathcal{H}}$ is the identity operator on $\mathcal{H}$. For  every $\sigma\subset I$, we show the complement of $\sigma$ by $\sigma^{c}$. Also,  we use of $[n]$ to denote the set $\{1, 2, ..., n\}$.  For two Hilbert spaces $\mathcal{H}_{1}$ and $\mathcal{H}_{2}$ we denote by $B(\mathcal{H}_{1},\mathcal{H}_{2})$ the collection of all bounded linear operators between $\mathcal{H}_{1}$ and $\mathcal{H}_{2}$, and we abbreviate $B(\mathcal{H},\mathcal{H})$ by $B(\mathcal{H})$. Moreover,  we denote the range of $T\in B(\mathcal{H})$ by $R(T)$, the null space of $T$ by $N(T)$ and the orthogonal projection of $\mathcal{H}$ onto a closed subspace $V \subseteq \mathcal{H}$  by $\pi_{V}$.

\section{Weaving fusion frames}
In this section, we survey the notion of weaving Hilbert space fusion frames. Also, we obtain some approaches for constructing of weaving fusion frames and Riesz fusion bases in terms of local frames and   dual fusion frames. As a result we show that Riesz fusion bases are woven with all of their duals.
\begin{defn}
A finite family of fusion frames $\{(W_{ij},\omega_{ij})\}_{j=1,i\in I}^{M}$ in Hilbert space $\mathcal{H}$ is said  to be woven if there are universal constants $C$ and $D$ so that for every partition $\{\sigma_{j}\}_{j=1}^{M}$ of $I$, the family   $\{(W_{ij},\omega_{ij})\}_{j=1,i\in \sigma_{j}}^{M}$ is a fusion frame for $\mathcal{H}$ with bounds $C$ and $D$, respectively. Each family $\{(W_{ij},\omega_{ij})\}_{j=1,i\in \sigma_{j}}^{M}$ is called a weaving.
\end{defn}
Also, the family $\{(W_{ij},\omega_{ij})\}_{j=1,i\in I}^{M}$ is called weakly woven if for every partition  $\{\sigma_{j}\}_{j=1}^{M}$ of $I$, the family $\{(W_{ij},\omega_{ij})\}_{j=1,i\in \sigma_{j}}^{M}$ is a fusion frame for $\mathcal{H}$. Clearly, for two fusion frame $W$ and $V$ every weaving $\{(W_{ij},\omega_{ij})\}_{j=1,i\in \sigma_{j}}^{M}$  is a Bessel fusion sequence with Bessel bound $\sum_{j\in [n]}D_{W_{j}}$.

The following lemma, which gives a necessary and sufficient condition for weaving fusion frames in terms of local frames.
\begin{lem}\label{1lem}
For every $j\in [n]$, suppose $W_{j}=\{(W_{ij},\omega_{ij})\}_{i\in I}$  is a  fusion frame  for $\mathcal{H}$. Also, let $\{f_{i_{j},k}\}_{k\in K_{i_{j}}}$ be respective local frames of $W_{ij}$. Then the following conditions are equivalent;
\begin{itemize}
\item[(i)] The family $\{W_{j} :\quad j\in [n]\}$ are woven.
\item[(ii)] The family $\cup_{j\in [n]}\{\omega_{ij}f_{i_{j},k}\}_{k\in K_{i_{j}}, i\in \sigma_{j}}$ is a frame for $\mathcal{H}$.
\end{itemize}
\end{lem}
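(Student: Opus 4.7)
The plan is to reduce the equivalence to the Casazza--Kutyniok local--frame characterization of fusion frames (Theorem 3.2 of \cite{Cas04}), applied uniformly across all partitions of $I$. The key observation is that because $[n]$ is finite, a single pair of uniform local--frame bounds can be chosen that controls every weaving simultaneously, independently of the partition.

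First I would fix, for each $j\in[n]$ and $i\in I$, the frame bounds $\alpha_{ij}\le\beta_{ij}$ of the local frame $\{f_{i_j,k}\}_{k\in K_{i_j}}$ for $W_{ij}$, and set
\begin{equation*}
\alpha := \min_{j\in[n]}\inf_{i\in I}\alpha_{ij}, \qquad \beta := \max_{j\in[n]}\sup_{i\in I}\beta_{ij}.
\end{equation*}
Because each $W_j$ is itself a fusion frame, condition (\ref{sup}) applied to $W_j$ yields $0<\inf_i\alpha_{ij}\le\sup_i\beta_{ij}<\infty$ for every individual $j$, and the finiteness of $[n]$ upgrades this to $0<\alpha\le\beta<\infty$.

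Next, for any partition $\{\sigma_j\}_{j=1}^{n}$ of $I$ and any $f\in\mathcal{H}$, I would exploit the fact that each $f_{i_j,k}$ lies in $W_{ij}$, so that $\langle f,f_{i_j,k}\rangle=\langle \pi_{W_{ij}}f,f_{i_j,k}\rangle$. Multiplying the local frame inequalities by $\omega_{ij}^{2}$ and summing over $i\in\sigma_j$ and $j\in[n]$ produces the sandwich
\begin{equation*}
\alpha\sum_{j=1}^{n}\sum_{i\in\sigma_j}\omega_{ij}^{2}\|\pi_{W_{ij}}f\|^{2}
\;\le\;\sum_{j=1}^{n}\sum_{i\in\sigma_j}\sum_{k\in K_{i_j}}|\langle f,\omega_{ij}f_{i_j,k}\rangle|^{2}
\;\le\;\beta\sum_{j=1}^{n}\sum_{i\in\sigma_j}\omega_{ij}^{2}\|\pi_{W_{ij}}f\|^{2}.
\end{equation*}

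This single chain of inequalities delivers both implications at once. For (i)$\Rightarrow$(ii), uniform fusion--frame bounds $C,D$ for every weaving propagate through the outer terms to uniform frame bounds $C\alpha,\,D\beta$ for the corresponding union of scaled local frames. Conversely, for (ii)$\Rightarrow$(i), uniform frame bounds $A,B$ for the union force uniform fusion--frame bounds $A/\beta,\,B/\alpha$ on every weaving. Since the scaling factors $\alpha,\beta$ are partition--independent, the equivalence (i)$\Leftrightarrow$(ii) follows. There is no substantive obstacle beyond careful bookkeeping of indices; the one point that must be verified with care is that $\alpha>0$ and $\beta<\infty$ hold \emph{uniformly} in both $i$ and $j$, which is exactly where the finiteness of $[n]$ is essential.
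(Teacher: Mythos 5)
Your proposal is correct and follows essentially the same route as the paper: both establish that the local-frame bounds can be taken uniform over all partitions (using the finiteness of $[n]$) and then invoke the Casazza--Kutyniok local-frame characterization of fusion frames. The only difference is cosmetic --- you inline the sandwich inequality underlying Theorem 3.2 of \cite{Cas04} and thereby make the universal constants $C\alpha$, $D\beta$ (resp.\ $A/\beta$, $B/\alpha$) explicit, whereas the paper simply cites that theorem.
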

\begin{proof}
Consider $A_{i_{j}}$ and  $B_{i_{j}}$ as the lower and upper frame bounds of $\{f_{i_{j},k}\}_{k\in K_{i_{j}}}$, respectively. Then for any partition $\sigma=\{\sigma_{j}\}_{j\in [n]}$ of $I$  we have that
\begin{eqnarray*}
\mathcal{A}:=\inf \{A_{i_{j}}; \quad  j\in [n], i\in \sigma_{j}\}\geq \min \{\inf\{A_{i_{j}}\}_{i\in I}, \quad j\in [n]\}>0,
\end{eqnarray*}
and
\begin{eqnarray*}
\mathcal{B}:=\sup\{B_{i_{j}}; \quad  j\in [n], i\in \sigma_{j}\}\leq \max \{\sup\{B_{i_{j}}\}_{i\in I}, \quad j\in [n]\}< \infty.
\end{eqnarray*}
Hence, for every weaving $\cup_{j\in [n]}\{(W_{ij}, \omega_{ij})\}_{i\in \sigma_{j}}$ the sequences
\begin{eqnarray*}
\{h_{i_{j}, k}\}_{k} =\begin{cases}
\begin{array}{ccc}
\{f_{i_{1},k}\}_{k\in K_{i_{1}}}& \;
{i\in \sigma_{1}}, \\
\{f_{i_{2},k}\}_{k\in K_{i_{2}}}& \;
{i\in \sigma_{2}}, \\
.\\
.\\
.\\
\{f_{i_{n},k}\}_{k\in K_{i_{n}}}& \;
{i\in \sigma_{n}}, \\
\end{array}
\end{cases}
\end{eqnarray*}
are local frames.
Thus, applying Theorem 3.2 of \cite{Cas04}, $\cup_{j\in [n]}\{(W_{ij}, \omega_{ij})\}_{i\in \sigma_{j}}$ is a fusion frame if and only if  $\cup_{j\in [n]}\{\omega_{ij}f_{i_{j},k}\}_{k\in K_{i_{j}}, i\in \sigma_{j}}$ is a frame for $\mathcal{H}$. This follows the desired result.
\end{proof}
The next theorem is proved by a similar approach to Theorem \ref{2.0} and  is useful in some of our main result..
\begin{thm}\label{weakly equ}
Suppose $W=\{(W_{i},\omega_{i})\}_{i\in I}$ and $V=\{(V_{i},\nu_{i})\}_{i\in I}$ are two fusion frames  for $\mathcal{H}$, the following are equivalent;
\begin{itemize}
\item[(i)] The two fusion frames $W$ and $V$  are woven.
\item[(ii)] The two fusion frames $W$ and $V$  are weakly woven.
\end{itemize}
\end{thm}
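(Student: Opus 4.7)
The implication (i)$\Rightarrow$(ii) is immediate from the definitions. For the converse, the universal upper bound is already contained in the remark preceding Lemma~\ref{1lem}: for any partition $\sigma=(\sigma_1,\sigma_2)$ of $I$ and any $f\in\mathcal{H}$ one has
\begin{equation*}
\sum_{i\in\sigma_1}\omega_i^2\|\pi_{W_i}f\|^2+\sum_{i\in\sigma_2}\nu_i^2\|\pi_{V_i}f\|^2\le(D_W+D_V)\|f\|^2,
\end{equation*}
so the entire burden of the theorem is to produce a universal \emph{lower} bound from the weakly woven hypothesis, and my plan is to follow the strategy of Theorem~\ref{2.0}.

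I would argue by contradiction: if no universal lower bound exists, then for each $m\in\mathbb{N}$ one can pick a partition $\sigma^m=(\sigma_1^m,\sigma_2^m)$ of $I$ and a unit vector $f_m\in\mathcal{H}$ satisfying
\begin{equation*}
\sum_{i\in\sigma_1^m}\omega_i^2\|\pi_{W_i}f_m\|^2+\sum_{i\in\sigma_2^m}\nu_i^2\|\pi_{V_i}f_m\|^2<\frac{1}{4^m}.
\end{equation*}
From these data one must manufacture a \emph{single} partition $\sigma^*=(\sigma_1^*,\sigma_2^*)$ of $I$ whose corresponding weaving is not a fusion frame, contradicting weakly wovenness. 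Enumerating $I=\{i_1,i_2,\dots\}$, a diagonal/pigeonhole extraction gives a subsequence $(m_k)$ along which the membership of each $i_\ell$ in $\sigma_1^{m_k}$ is eventually constant in $k$; take $\sigma_1^*$ to be the set of those indices whose eventual location is $\sigma_1^{m_k}$, so that $\sigma_1^*$ and $\sigma_1^{m_k}$ agree on $\{i_1,\dots,i_k\}$. Comparing the weaving sum for $\sigma^*$ at $f_{m_k}$ with the one for $\sigma^{m_k}$ (which is smaller than $1/4^{m_k}$), one sees that the discrepancy is controlled by the Bessel tails of $W$ and $V$ past $\{i_1,\dots,i_k\}$; tuning the extraction carefully one forces this discrepancy to zero, producing the desired contradiction.

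The main obstacle is exactly this last step: the Bessel tails $\sum_{\ell>k}\omega_{i_\ell}^2\|\pi_{W_{i_\ell}}f_{m_k}\|^2$ and $\sum_{\ell>k}\nu_{i_\ell}^2\|\pi_{V_{i_\ell}}f_{m_k}\|^2$ must be made small \emph{uniformly} across the moving unit vectors $f_{m_k}$. In finite dimensions a norm-convergent subsequence of $(f_m)$ resolves this immediately, but in an arbitrary separable $\mathcal{H}$ one has only weak compactness of the unit ball, so the combinatorial extraction must be interleaved with an analytic thinning of $(m_k)$ that also secures tail control --- for instance, by choosing the cut-off $\ell=\ell(k)$ large enough that both Bessel tails past $i_{\ell(k)}$ evaluated at the specific $f_{m_k}$ under consideration are below $2^{-k}$. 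Weaving this tail control into the pigeonhole step is the delicate point that makes the proof non-trivial.
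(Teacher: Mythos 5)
Your overall strategy is the one the paper intends: the paper gives no proof of this theorem, saying only that it ``is proved by a similar approach to Theorem~\ref{2.0}'', i.e.\ by transplanting the Bemrose--Casazza et al.\ argument with $\omega_i^2\Vert\pi_{W_i}f\Vert^2$ playing the role of $\vert\langle f,\varphi_i\rangle\vert^2$. Your setup is also correct: (i)$\Rightarrow$(ii) is immediate, the universal upper bound $D_W+D_V$ is automatic, and the entire content is the universal lower bound.

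The gap is the step you yourself flag, and as sketched it is not merely delicate but circular. You need the limit partition $\sigma^*$ to agree with $\sigma^{m_k}$ on an initial segment $\{i_1,\dots,i_{\ell(k)}\}$ long enough to capture all but $2^{-k}$ of the Bessel energy of $f_{m_k}$; but $\ell(k)$ can only be chosen after $m_k$ (hence $f_{m_k}$) is fixed, while the pigeonhole defining $\sigma^*$ on $\{i_{k+1},\dots,i_{\ell(k)}\}$ selects whichever pattern survives infinitely often in the subsequence --- and nothing forces that pattern to agree with $\sigma^{m_k}$ there. If you instead commit $\sigma^*$ to agree with $\sigma^{m_k}$ up to $i_{\ell(k)}$, you may reduce the surviving family of candidates to a finite set and be unable to continue the extraction. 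So the diagonal argument and the tail control genuinely fight each other, and a plain subsequence extraction from pre-chosen data $(\sigma^m,f_m)$ does not close the proof. Note also that the soft fix fails: for fixed $f$ the map $\sigma\mapsto\sum_{i\in\sigma}\omega_i^2\Vert\pi_{W_i}f\Vert^2+\sum_{i\in\sigma^c}\nu_i^2\Vert\pi_{V_i}f\Vert^2$ is continuous on $\{0,1\}^I$, so the optimal lower bound is an infimum of continuous functions, hence only upper semicontinuous, which gives the useless inequality $A_{\sigma^*}\geq 0$ at the limit. What is actually needed (and what the discrete argument behind Theorem~\ref{2.0} supplies) is an induction on finite initial pieces of the partition carrying the invariant ``for every $\epsilon>0$ some full partition extending the current finite assignment admits a unit witness with weaving sum $<\epsilon$'', with the witness vectors produced stage by stage from this invariant rather than fixed in advance; alternatively one can route the whole statement through Lemma~\ref{1lem} and a block version of the discrete theorem. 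As written, your proposal identifies the right road but stops exactly where the theorem's difficulty lies.
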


Applying  Lemma \ref{1lem}  we  obtain the following   extension  of    Theorem 5.2  in \cite{Bemros} for  Riesz fusion bases.
\begin{thm}\label{riesz1}
Let $\lbrace
W_{i}\rbrace_{i\in I}$ and  $\lbrace
V_{i}\rbrace_{i\in I}$ be fusion Riesz bases for $\mathcal{H}$  so that for  every $\sigma \subset I$,   the family $\{W_{i}\}_{i\in \sigma}\cup\{V_{i}\}_{i\in \sigma^{c}}$ is a   fusion Riesz sequence. Then for every $\sigma \subset I$  the family $\{W_{i}\}_{i\in \sigma}\cup\{V_{i}\}_{i\in \sigma^{c}}$ is a  fusion Riesz basis for $\mathcal{H}$.
\end{thm}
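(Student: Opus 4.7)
The plan is to reduce the fusion statement to the corresponding vector statement via local orthonormal bases, and then invoke Theorem~5.2 of \cite{Bemros}. Fix $\sigma\subset I$. I already know from the hypothesis that $\{W_i\}_{i\in\sigma}\cup\{V_i\}_{i\in\sigma^{c}}$ is a fusion Riesz sequence, so it only remains to establish completeness, i.e.\ $\overline{\operatorname{span}}\bigl(\{W_i\}_{i\in\sigma}\cup\{V_i\}_{i\in\sigma^{c}}\bigr)=\mathcal{H}$.

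First I would choose, for each $i\in I$, an orthonormal basis $\{e_{i,k}\}_{k\in K_i}$ of $W_i$ and $\{g_{i,k}\}_{k\in K_i'}$ of $V_i$. These serve as local frames for the two $1$-uniform fusion frames with $\alpha=\beta=1$, so (\ref{sup}) is satisfied trivially. Because each local basis is orthonormal within its subspace, one has $\|\sum_k c_{i,k}e_{i,k}\|^{2}=\sum_k|c_{i,k}|^{2}$ and similarly for $g_{i,k}$. Hence the fusion Riesz basis inequalities for $\{W_i\}_{i\in I}$ translate verbatim into the Riesz basis inequalities for the vector family $\{e_{i,k}\}_{i\in I,\,k\in K_i}$, and completeness of $\{W_i\}$ in $\mathcal{H}$ yields completeness of $\{e_{i,k}\}$; the same holds for $\{g_{i,k}\}_{i\in I,\,k\in K_i'}$. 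Applying the very same translation to the weaving, the hypothesized fusion Riesz sequence $\{W_i\}_{i\in\sigma}\cup\{V_i\}_{i\in\sigma^{c}}$ gives rise to a (vector) Riesz sequence $\{e_{i,k}\}_{i\in\sigma,\,k}\cup\{g_{i,k}\}_{i\in\sigma^{c},\,k}$ in $\mathcal{H}$.

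Now I would invoke Theorem~5.2 of \cite{Bemros} at the vector level: a weaving of two Riesz bases of $\mathcal{H}$ that is itself a Riesz sequence is necessarily a Riesz basis of $\mathcal{H}$. Applied to $\{e_{i,k}\}_{i\in\sigma,\,k}$ and $\{g_{i,k}\}_{i\in\sigma^{c},\,k}$, this forces completeness of the weaving, whence
\[
\overline{\operatorname{span}}\bigl(\{W_i\}_{i\in\sigma}\cup\{V_i\}_{i\in\sigma^{c}}\bigr) \supseteq \overline{\operatorname{span}}\bigl(\{e_{i,k}\}_{i\in\sigma,\,k}\cup\{g_{i,k}\}_{i\in\sigma^{c},\,k}\bigr)=\mathcal{H}.
\]
Together with the fusion Riesz sequence property furnished by the hypothesis, this is exactly what is needed to conclude that $\{W_i\}_{i\in\sigma}\cup\{V_i\}_{i\in\sigma^{c}}$ is a fusion Riesz basis. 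The main obstacle is the completeness step, i.e.\ the vector-level Theorem~5.2 of \cite{Bemros}, which supplies the genuinely non-trivial content; the local-orthonormal-basis reduction is then only a routine translation, precisely because the Riesz bounds pass through orthonormal local bases without distortion.
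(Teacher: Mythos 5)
Your strategy is the same as the paper's: pass to orthonormal bases of the local subspaces, observe that orthonormality makes the fusion Riesz bounds transfer verbatim to the corresponding vector families, and then settle completeness of the weaving at the vector level via the Riesz-basis weaving result of Bemrose et al. The reduction itself is carried out correctly and matches the paper's computation.

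There are, however, two points where your appeal to Theorem~5.2 of \cite{Bemros} is not legitimate as written. First, you paraphrase it as ``a weaving of two Riesz bases of $\mathcal{H}$ that is itself a Riesz sequence is necessarily a Riesz basis'' and apply it to the single fixed $\sigma$. That statement is false: take $\varphi_i=e_i$ and $\psi_i=e_{i+1}$ for an orthonormal basis $\{e_i\}_{i\in\mathbb{Z}}$ of $\ell^2(\mathbb{Z})$ and $\sigma=\{i\le 0\}$; the resulting weaving is an orthonormal (hence Riesz) sequence missing $e_1$, so it is not complete. The actual Theorem~5.2 needs the Riesz-sequence hypothesis for \emph{every} $\sigma$ simultaneously --- its proof moves one index at a time through a chain of subsets --- and since you do have that hypothesis available, this is repairable, but the step must invoke the correct statement. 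Second, even the correct statement does not literally apply: $\dim W_i$ and $\dim V_i$ need not coincide, so the two vector families $\{e_{i,k}\}$ and $\{g_{i,k}\}$ are not indexed by a common set, and the weavings you produce are only the \emph{block} weavings (each $i$ contributes its whole local basis from one side or the other), not arbitrary vector-level weavings. This is precisely why the paper says the conclusion follows ``by a similar approach to Lemma~5.1 and Theorem~5.2'' rather than by citing them outright: one must rerun the swap-one-piece-at-a-time argument at the level of blocks. Your proof should carry out, or at least correctly describe, that adaptation instead of presenting the completeness step as a direct application.
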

\begin{proof}
Suppose $\{e_{i,j}\}_{j\in J_{i}}$ and $\{u_{i,j}\}_{j\in K_{i}}$ are respective orthonormal bases of $W_{i}$ and $V_{i}$, for all $i\in I$. Then $\{e_{i,j}\}_{j\in J_{i}, i\in I}$ and $\{u_{i,j}\}_{j\in K_{i}, i\in I}$ are Riesz bases for $\mathcal{H}$, see \cite{Cas04}. Moreover,  the family  $\{e_{i,j}\}_{j\in J_{i}, i\in \sigma}\cup \{u_{i,j}\}_{j\in K_{i}, i\in \sigma^{c}}$ is Riesz sequence, for every $\sigma \subset I$.
More precisely, let $C_{\sigma}$ and $D_{\sigma}$ be  Riesz sequence bounds of $\{W_{i}\}_{i\in \sigma}\cup\{V_{i}\}_{i\in \sigma^{c}}$, respectively  and  $\{c_{i,j}\}_{j\in J_{i}, i\in \sigma}\cup \{d_{i,j}\}_{j\in K_{i}, i\in \sigma^{c}}$ be a sequence in $l^{2}$. Then  we obtain
 \begin{eqnarray*}
&&C_{\sigma}\left(\sum_{ i\in \sigma}\sum_{j\in J_{i}}\left\vert c_{i,j}\right\vert^{2}+ \sum_{ i\in \sigma^{c}}  \sum_{j\in K_{i}}\left\vert d_{i,j}\right\vert^{2}\right)\\
&=&C_{\sigma}\left(\sum_{ i\in \sigma} \left\Vert \sum_{j\in J_{i}}c_{i,j}e_{i,j}\right\Vert^{2}+ \sum_{ i\in \sigma^{c}} \left\Vert \sum_{j\in K_{i}}d_{i,j}u_{i,j}\right\Vert^{2}\right)\\
&\leq& \left \Vert \sum_{ i\in \sigma}\sum_{j\in J_{i}}c_{i,j}e_{i,j}+\sum_{ i\in \sigma^{c}}\sum_{j\in K_{i}}d_{i,j}u_{i,j}\right\Vert^{2}\\
&\leq& D_{\sigma}\left(\sum_{ i\in \sigma} \left\Vert \sum_{j\in J_{i}}c_{i,j}e_{i,j}\right\Vert^{2}+ \sum_{ i\in \sigma^{c}} \left\Vert \sum_{j\in K_{i}}d_{i,j}u_{i,j}\right\Vert^{2}\right)\\
&=&D_{\sigma}\left(\sum_{ i\in \sigma}\sum_{j\in J_{i}}\left\vert c_{i,j}\right\vert^{2}+ \sum_{ i\in \sigma^{c}}  \sum_{j\in K_{i}}\left\vert d_{i,j}\right\vert^{2}\right)\\
\end{eqnarray*}
 Hence,  by a similar approach to Lemma 5.1 and Theorem 5.2 of \cite{Bemros} one can check that  the family $\{e_{i,j}\}_{j\in J_{i}, i\in \sigma}\cup \{u_{i,j}\}_{j\in K_{i}, i\in \sigma^{c}}$ is actually a Riesz basis and consequently $\{W_{i}\}_{i\in \sigma}\cup\{V_{i}\}_{i\in \sigma^{c}}$ is a  fusion Riesz basis for $\mathcal{H}$, by Lemma \ref{1lem}.
\end{proof}
The next result is also proved   by Lemma \ref{1lem} and with a similar approach to Theorem 5.3 of \cite{Bemros}.
\begin{prop}
Let $\lbrace
W_{i}\rbrace_{i\in I}$ and  $\lbrace
V_{i}\rbrace_{i\in I}$ be fusion Riesz bases and there is a uniform constant $A>0$  so that for every $\sigma \subset I$ the family $\{W_{i}\}_{i\in \sigma}\cup\{V_{i}\}_{i\in \sigma^{c}}$ is a  fusion  frame with lower bound $A$. Then for every $\sigma \subset I$  the family $\{W_{i}\}_{i\in \sigma}\cup\{V_{i}\}_{i\in \sigma^{c}}$ is a  fusion Riesz basis for $\mathcal{H}$.
\end{prop}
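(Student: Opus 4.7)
The plan is to reduce the proposition to the vector-frame version proved as Theorem 5.3 of \cite{Bemros}, using Lemma \ref{1lem} as the transfer mechanism between subspaces and vectors. For each $i\in I$, fix orthonormal bases $\{e_{i,j}\}_{j\in J_i}$ of $W_i$ and $\{u_{i,j}\}_{j\in K_i}$ of $V_i$. Exactly as in the proof of Theorem \ref{riesz1}, because $\{W_i\}_{i\in I}$ and $\{V_i\}_{i\in I}$ are fusion Riesz bases for $\mathcal{H}$, the collections $\{e_{i,j}\}_{j\in J_i,i\in I}$ and $\{u_{i,j}\}_{j\in K_i,i\in I}$ are Riesz bases for $\mathcal{H}$.

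Next, fix an arbitrary $\sigma\subset I$. The local frames in question are orthonormal bases, so all their frame bounds equal $1$ and condition (\ref{sup}) holds with $\alpha=\beta=1$ uniformly across both families. By Lemma \ref{1lem}, the weaving $\{W_i\}_{i\in\sigma}\cup\{V_i\}_{i\in\sigma^{c}}$ is a fusion frame if and only if $\{e_{i,j}\}_{j\in J_i,i\in\sigma}\cup\{u_{i,j}\}_{j\in K_i,i\in\sigma^{c}}$ is a frame, and the bounds translate verbatim. The hypothesis that every such weaving of subspaces has the common lower fusion frame bound $A$ therefore gives a uniform lower frame bound (also equal to $A$) for every weaving of the two vector Riesz bases $\{e_{i,j}\}$ and $\{u_{i,j}\}$.

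Now I invoke Theorem 5.3 of \cite{Bemros}: two Riesz bases all of whose weavings are frames with a common lower bound are in fact woven by Riesz bases. Applied to $\{e_{i,j}\}$ and $\{u_{i,j}\}$, this yields that the weaving
\[
\{e_{i,j}\}_{j\in J_i,i\in\sigma}\cup\{u_{i,j}\}_{j\in K_i,i\in\sigma^{c}}
\]
is a Riesz basis for $\mathcal{H}$ for every $\sigma\subset I$. Using the same block-by-block computation displayed in the proof of Theorem \ref{riesz1} (now read from vectors back to subspaces), the Riesz bounds of this vector basis provide fusion Riesz sequence bounds for $\{W_i\}_{i\in\sigma}\cup\{V_i\}_{i\in\sigma^{c}}$; completeness in $\mathcal{H}$ follows because the $e_{i,j}$'s and $u_{i,j}$'s span $\mathcal{H}$. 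Another application of Lemma \ref{1lem} then upgrades this to a fusion Riesz basis for $\mathcal{H}$.

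The main obstacle I anticipate is bookkeeping the transfer of constants in both directions so that the lower bound remains uniform in $\sigma$: this is what forces the choice of \emph{orthonormal} local bases rather than arbitrary local frames, since any $i$-dependent local frame bounds would be absorbed into the weaving bounds and could destroy the uniform lower bound needed to feed into Theorem 5.3 of \cite{Bemros}. Once that uniformity is secured, the rest of the argument is a straightforward back-and-forth application of Lemma \ref{1lem} and the cited theorem.
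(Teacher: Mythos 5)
Your proposal is correct and follows essentially the same route the paper indicates: the paper gives no detailed proof but states that the result follows from Lemma \ref{1lem} together with the approach of Theorem 5.3 of \cite{Bemros}, which is exactly your reduction to orthonormal local bases, transfer of the uniform lower bound $A$ to the vector weavings, and application of the discrete Riesz-basis weaving theorem. Your observation that orthonormal local bases make the bound transfer exact is the right way to make the sketched argument precise.
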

\begin{rem}
It is worth to note that unlike discrete Riesz bases, see Theorem 5.4 of  \cite{Bemros}, if
$W=\lbrace
W_{i}\rbrace_{i\in I}$ is  a fusion Riesz basis and  $V=\lbrace
(V_{i}, \nu_{i})\rbrace_{i\in I}$ is a fusion frame for $\mathcal{H}$ so that $W$ and $V$ are woven. Then $V$ is not necessary  a  fusion Riesz basis for $\mathcal{H}$. Indeed, Suppose
\begin{eqnarray*}
W_{1} ={\textit{span}}\{(1,0,0)\}, \quad W_{2} = {\textit{span}}\{(0,1,0)\}, \quad W_{3} = {\textit{span}}\{(0,0,1)\}.
\end{eqnarray*}
and
\begin{eqnarray*}
V_{1} ={\textit{span}}\{(1,0,0), (0,1,0)\}, \quad V_{2} = {\textit{span}}\{(0,1,0)\}, \quad V_{3} = {\textit{span}}\{(0,0,1)\}.
\end{eqnarray*}
Then $W = \lbrace W_{i}\rbrace_{i=1}^{3}$ is a fusion Riesz basis of $\mathbb{R}^{3}$ which is woven with $V= \lbrace V_{i}\rbrace_{i=1}^{3}$, while  $V$ is not fusion Riesz basis.
\end{rem}
\begin{thm}
Let $W=\lbrace
W_{i}\rbrace_{i\in I}$   be a fusion Riesz basis for $\mathcal{H}$. Then there exists a fusion Riesz basis $\lbrace
V_{i}\rbrace_{i\in I}$ so that $W_{i}\perp V_{j}$, for all $i\neq j$. Moreover, for every $\sigma \subset I$  the family $\{W_{i}\}_{i\in \sigma}\cup\{V_{i}\}_{i\in \sigma^{c}}$ is a fusion Riesz basis for $\mathcal{H}$,
i.e., $W$ and $V$ are woven.
\end{thm}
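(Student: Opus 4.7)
The plan is to build each $V_i$ from the dual of a canonical Riesz basis attached to $W$. Concretely, for each $i \in I$ fix an orthonormal basis $\{e_{i,j}\}_{j \in J_i}$ of $W_i$; since $W$ is a fusion Riesz basis, $\{e_{i,j}\}_{j \in J_i,\, i \in I}$ is a Riesz basis of $\mathcal{H}$ (as recalled in the proof of Theorem \ref{riesz1}). Let $\{\tilde{e}_{i,j}\}$ denote its dual Riesz basis, characterized by $\langle e_{i,j}, \tilde{e}_{k,l}\rangle = \delta_{ik}\delta_{jl}$, and set
\[
V_i := \overline{\operatorname{span}}\{\tilde{e}_{i,j}\}_{j \in J_i}.
\]
The required orthogonality $W_i \perp V_k$ for $i \neq k$ is then immediate from biorthogonality. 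To see that $\{V_i\}_{i \in I}$ is itself a fusion Riesz basis, one notes that $\{\tilde{e}_{i,j}\}_{j \in J_i}$ is a Riesz basis of $V_i$ whose bounds are controlled by the global Riesz bounds of $\{\tilde{e}_{i,j}\}$, and that the full collection is complete in $\mathcal{H}$; a direct computation in the spirit of Lemma \ref{1lem} then transfers the Riesz basis property from the vector level to the fusion level.

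For the weaving claim, fix $\sigma \subset I$ and set
\[
\mathcal{H}_1 := \overline{\sum_{i \in \sigma} W_i}, \qquad \mathcal{H}_2 := \overline{\sum_{k \in \sigma^c} V_k}.
\]
The orthogonality $W_i \perp V_k$ for $i \in \sigma,\, k \in \sigma^c$ immediately gives $\mathcal{H}_1 \perp \mathcal{H}_2$. The crucial step is upgrading this to a direct sum decomposition $\mathcal{H} = \mathcal{H}_1 \oplus \mathcal{H}_2$: if $f \in \mathcal{H}_1^\perp$, then $\langle f, e_{i,j}\rangle = 0$ for all $i \in \sigma$, and expanding $f$ in the dual Riesz basis yields
\[
f = \sum_{i,j} \langle f, e_{i,j}\rangle\, \tilde{e}_{i,j} = \sum_{i \in \sigma^c,\, j} \langle f, e_{i,j}\rangle\, \tilde{e}_{i,j} \in \mathcal{H}_2.
\]
Thus $\mathcal{H}_1^\perp \subseteq \mathcal{H}_2$, and combined with the reverse inclusion coming from $\mathcal{H}_1 \perp \mathcal{H}_2$ one gets $\mathcal{H}_1^\perp = \mathcal{H}_2$, hence $\mathcal{H} = \mathcal{H}_1 \oplus \mathcal{H}_2$.

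With this decomposition in hand, the fusion Riesz basis property of $\{W_i\}_{i \in \sigma} \cup \{V_i\}_{i \in \sigma^c}$ falls out cleanly. For any finite sum $\sum_{i \in \sigma} f_i + \sum_{k \in \sigma^c} g_k$ with $f_i \in W_i$ and $g_k \in V_k$, Pythagoras across $\mathcal{H}_1 \oplus \mathcal{H}_2$ gives $\|\sum_{i \in \sigma} f_i + \sum_{k \in \sigma^c} g_k\|^2 = \|\sum_{i \in \sigma} f_i\|^2 + \|\sum_{k \in \sigma^c} g_k\|^2$, and each summand is controlled above and below by the fusion Riesz sequence bounds of $W$ and $V$ respectively (which pass automatically to any sub-family); completeness is exactly $\mathcal{H}_1 + \mathcal{H}_2 = \mathcal{H}$. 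The main obstacle I anticipate is precisely the orthogonal decomposition step: while $\mathcal{H}_1 \perp \mathcal{H}_2$ is built in by construction, the nontrivial inclusion $\mathcal{H}_1^\perp \subseteq \mathcal{H}_2$ is where the biorthogonality of $\{e_{i,j}\}$ and $\{\tilde{e}_{i,j}\}$ does all the real work; once that is secured, everything else is Pythagorean bookkeeping together with an application of Lemma \ref{1lem}.
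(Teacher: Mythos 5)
Your proposal is correct, but it constructs $V$ by a genuinely different route than the paper. The paper invokes the structure theorem for fusion Riesz bases (citing Asgari) to write $W_i=UN_i$ with $U$ invertible and $\{N_i\}_{i\in I}$ an orthonormal fusion basis, sets $V_i=(U^{-1})^*N_i$, gets the orthogonality from $\langle Uf,(U^{-1})^*g\rangle=\langle f,g\rangle$, and proves completeness of each weaving by expanding a putative orthogonal vector as $f=\sum_i U\varphi_i$ and testing it against $\sum_{i\in\sigma}U\varphi_i+\sum_{i\in\sigma^c}(U^{-1})^*\varphi_i$. You instead take the dual Riesz basis of the concatenated local orthonormal bases; since the frame operator of $\{e_{i,j}\}_{j\in J_i,i\in I}$ is exactly $S_W$ (as $\sum_j\langle f,e_{i,j}\rangle e_{i,j}=\pi_{W_i}f$), your $V_i$ is precisely the canonical dual subspace $S_W^{-1}W_i$, which ties this theorem directly to Theorem \ref{alter dual}; and your completeness argument proceeds through the cleaner orthogonal decomposition $\mathcal{H}=\mathcal{H}_1\oplus\mathcal{H}_2$ obtained from the biorthogonal expansion $f=\sum_{i,j}\langle f,e_{i,j}\rangle\tilde e_{i,j}$. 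What your route buys is independence from the operator-theoretic representation $W_i=UN_i$ (you only need that local orthonormal bases concatenate to a Riesz basis, which the paper already uses elsewhere) and an explicit identification of $V$ as the canonical dual; what the paper's route buys is that the Riesz sequence bounds of each weaving come out explicitly as $\min\{\Vert U^{-1}\Vert^{-2},\Vert U\Vert^{-2}\}$ without having to track the dual basis bounds. The one place where you should be slightly more explicit is the claim that $\{V_i\}_{i\in I}$ is itself a fusion Riesz basis: the two-step estimate $\Vert\sum_i g_i\Vert^2\asymp\sum_{i,j}|c_{i,j}|^2\asymp\sum_i\Vert g_i\Vert^2$ (using the global Riesz bounds of $\{\tilde e_{i,j}\}$ for both comparisons) should be written out, but it is routine and your outline is sound.
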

\begin{proof}
Since $W$ is a fusion Riesz  basis so there is an  invertible operator $U\in B(\mathcal{H})$ and an orthonormal fusion basis $\lbrace
N_{i}\rbrace_{i\in I}$ so that $W_{i}=UN_{i}$, for all $i\in I$, see \cite{Asgari}. Consider $V_{i}=(U^{-1})^{*}N_{i}$  then $V=\lbrace
V_{i}\rbrace_{i\in I}$ is also a fusion Riesz  basis and $W_{i}\perp V_{j}$, for all $i\neq j$. Indeed
\begin{eqnarray*}
\langle Uf, (U^{-1})^{*}g\rangle =\langle f, g\rangle = 0,
\end{eqnarray*}
for every $f\in N_{i}$ and $g\in N_{j}$. Moreover, for every $\sigma \subset I$, $\{Uf_{i}\}\in \sum_{i\in I}\oplus W_{i}$ and $\{(U^{-1})^{*}g_{i}\}\in \sum_{i\in I}\oplus V_{i}$ we obtain
\begin{eqnarray*}
&&\left\Vert \sum_{i\in \sigma}Uf_{i}+\sum_{i\in \sigma^{c}}(U^{-1})^{*}g_{i}\right\Vert^{2}\\
&=&\left\Vert \sum_{i\in \sigma}Uf_{i}\right\Vert^{2}+\left\Vert \sum_{i\in \sigma^{c}}(U^{-1})^{*}g_{i}\right\Vert^{2}+2Re \left\langle \sum_{i\in \sigma}Uf_{i}, \sum_{i\in \sigma^{c}}(U^{-1})^{*}g_{i}\right\rangle\\
&=&\left\Vert \sum_{i\in \sigma}Uf_{i}\right\Vert^{2}+\left\Vert \sum_{i\in \sigma^{c}}(U^{-1})^{*}g_{i}\right\Vert^{2}\\
&\geq& \Vert U^{-1}\Vert^{-2} \sum_{i\in \sigma}\Vert f_{i}\Vert^{2}+ \Vert U\Vert^{2}\sum_{i\in \sigma}\Vert g_{i}\Vert^{2}\\
&\geq& \beta\left( \sum_{i\in \sigma}\Vert f_{i}\Vert^{2}+\sum_{i\in \sigma}\Vert g_{i}\Vert^{2}\right)
\end{eqnarray*}
where $\beta=\min\{\Vert U^{-1}\Vert^{-2}, \Vert U\Vert^{2}\}$. Hence, the family $\{W_{i}\}_{i\in \sigma}\cup\{V_{i}\}_{i\in \sigma^{c}}$ is a  fusion Riesz sequence. It is sufficient to show that $\{UN_{i}\}_{i\in \sigma}\cup\{(U^{-1})^{*}N_{i}\}_{i\in \sigma^{c}}$ is a complete family in $\mathcal{H}$, for every $\sigma \subseteq I$. For this, suppose that  there exists $\sigma \subseteq I$ so that $\left(\overline{\operatorname{span}}\{UN_{i}\}_{i\in \sigma}\cup\{(U^{-1})^{*}N_{i}\}_{i\in \sigma^{c}} \right)^{\perp}\neq 0$. Thus, there is $0\neq f\in \left(\overline{\operatorname{span}}\{UN_{i}\}_{i\in \sigma}\cup\{(U^{-1})^{*}N_{i}\}_{i\in \sigma^{c}} \right)^{\perp}$. On the other hand, there is a unique sequence $\{\varphi_{i}\}\in \oplus N_{i}$ so that $f=\sum_{i\in I}U\varphi_{i}$ and so
\begin{eqnarray*}
0&=& \left\langle f, \sum_{i\in \sigma}U\varphi_{i}+\sum_{i\in \sigma^{c}}(U^{-1})^{*}\varphi_{i}\right\rangle\\
&=&\left\langle  \sum_{i\in \sigma}U\varphi_{i}+\sum_{i\in \sigma^{c}}U\varphi_{i}, \sum_{i\in \sigma}U\varphi_{i}+\sum_{i\in \sigma^{c}}(U^{-1})^{*}\varphi_{i}\right\rangle\\
&=&\left\Vert \sum_{i\in \sigma}U\varphi_{i}\right\Vert^{2}+ \left\Vert \sum_{i\in \sigma^{c}}\varphi_{i}\right\Vert^{2}.
\end{eqnarray*}
Hence,
\begin{eqnarray*}
f=  \sum_{i\in I}U\varphi_{i}= \sum_{i\in \sigma^{c}}U\varphi_{i}=U\sum_{i\in \sigma^{c}}\varphi_{i} = 0.
\end{eqnarray*}
This completes the proof.
\end{proof}

The authors in \cite{arefi1} showed that fusion Riesz bases unlike discrete Riesz bases have infinite many dual fusion frames.
In the following   theorem we prove that fusion Riesz bases are woven with all of their duals.
\begin{thm}\label{alter dual}
Let $V=\{(V_{i},\omega_{i})\}_{i\in I}$ be every dual of a fusion Riesz basis $W=\{(W_{i},\omega_{i})\}_{i\in I}$ in  $\mathcal{H}$. Then $W$ and $V$ are woven.
\end{thm}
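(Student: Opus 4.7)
The plan is to reduce to showing that the weaving satisfies the fusion-frame inequality for every two-set partition of $I$, and then to exploit the duality identity to obtain a uniform lower bound.

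By Theorem~\ref{weakly equ}, woven is equivalent to weakly woven, so it suffices to fix an arbitrary $\sigma \subseteq I$ and prove that $\{(W_i,\omega_i)\}_{i \in \sigma} \cup \{(V_i,\omega_i)\}_{i \in \sigma^c}$ is a fusion frame. The Bessel (upper) bound is immediate, since summing the Bessel bounds of $W$ and $V$ gives the uniform estimate $D_W+D_V$, independent of $\sigma$. For the lower bound I would split the dual reconstruction identity
\[
f \;=\; \sum_{i\in\sigma}\omega_i^{2}\,\pi_{V_i}S_W^{-1}\pi_{W_i}f\;+\;\sum_{i\in\sigma^{c}}\omega_i^{2}\,\pi_{V_i}S_W^{-1}\pi_{W_i}f,
\]
pair both sides with $f$, and use self-adjointness of $\pi_{V_i}$ and $S_W^{-1}$ to rewrite each summand as $\omega_i^{2}\langle S_W^{-1}\pi_{W_i}f,\pi_{V_i}f\rangle$. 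A pointwise bound $|\langle S_W^{-1}\pi_{W_i}f,\pi_{V_i}f\rangle|\le \|S_W^{-1}\|\,\|\pi_{W_i}f\|\,\|\pi_{V_i}f\|$ combined with the Cauchy--Schwarz inequality, applied independently on each of the two sums (keeping the $W$-partial sum over $\sigma$ and the $V$-partial sum over $\sigma^{c}$, while estimating the complementary partial sums by the full Bessel bounds $D_V$ and $D_W$), yields
\[
\|f\|^{2} \;\le\; \|S_W^{-1}\|\,\|f\|\Bigl(\sqrt{D_V\,A}+\sqrt{D_W\,B}\Bigr),
\]
where $A:=\sum_{i\in\sigma}\omega_i^{2}\|\pi_{W_i}f\|^{2}$ and $B:=\sum_{i\in\sigma^{c}}\omega_i^{2}\|\pi_{V_i}f\|^{2}$. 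Dividing by $\|f\|$, squaring, and using $(x+y)^{2}\le 2(x^{2}+y^{2})$ gives
\[
A+B \;\ge\; \frac{\|f\|^{2}}{2\,\|S_W^{-1}\|^{2}\,\max\{D_V,D_W\}},
\]
which is the sought uniform lower fusion-frame bound for every $\sigma$, completing the proof.

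The step I expect to be trickiest is the Cauchy--Schwarz bookkeeping in the previous display: on each partial sum one factor has to be kept local to $\sigma$ (respectively $\sigma^{c}$), while the complementary factor is estimated through the total Bessel bound of the \emph{other} fusion frame, so that only the two favourable partial sums remain on the right-hand side. A curious feature of this plan is that the fusion Riesz basis hypothesis on $W$ is not used in the estimate itself, entering only implicitly through the existence of $S_W^{-1}$ and the Bessel bound $D_W$; this suggests that either the authors extract a sharper constant from the special structure of a fusion Riesz basis --- for instance by reducing to a discrete Riesz-basis weaving via Lemma~\ref{1lem} and the corresponding result in~\cite{Bemros} --- or the statement is in fact valid for any fusion frame paired with any of its duals.
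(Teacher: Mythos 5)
Your proof is correct, but it follows a genuinely different route from the paper's. The paper first proves the special case of the canonical dual: using the biorthogonality of a fusion Riesz basis with $\{S_W^{-1}W_i\}_{i\in I}$ it shows that $\{W_i\}_{i\in\sigma}\cup\{S_W^{-1}W_i\}_{i\in\sigma^c}$ is a fusion Riesz sequence, checks completeness via the reconstruction formula, invokes Theorem~\ref{weakly equ} to upgrade weak wovenness to a universal lower bound $\mathcal{C}$, and then transfers the bound to an arbitrary dual $V$ through the inclusion $S_W^{-1}W_i\subseteq V_i$ (Corollary 2.6 of \cite{arefi1}), which is where the fusion Riesz basis hypothesis is used essentially. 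You instead split the reconstruction identity over $\sigma$ and $\sigma^c$, pair with $f$, and run Cauchy--Schwarz on each half, keeping the $\sigma$-partial sum of $W$ and the $\sigma^c$-partial sum of $V$ while absorbing the complementary factors into the Bessel bounds $D_V$ and $D_W$; the bookkeeping in your display is right, and the resulting bound $A+B\geq \|f\|^2/\bigl(2\|S_W^{-1}\|^2\max\{D_V,D_W\}\bigr)$ is a legitimate universal lower bound (so your opening appeal to Theorem~\ref{weakly equ} is actually superfluous). What your argument buys is generality and explicitness: it never uses the Riesz basis structure, so it proves that \emph{any} fusion frame is woven with each of its Găvruţa duals, with explicit universal constants --- in particular it would subsume the paper's closing Corollary without its extra hypothesis that the mixed families be fusion frame sequences. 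What the paper's argument buys is finer structural information in the canonical-dual case, namely that the weavings $\{W_i\}_{i\in\sigma}\cup\{S_W^{-1}W_i\}_{i\in\sigma^c}$ are complete fusion Riesz sequences, not merely fusion frames.
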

\begin{proof}
First, we show that $W$ is woven with its canonical dual. For this,
suppose  $\{f_{i}\}_{i\in I}$ and $\{S_{W}^{-1}g_{i}\}_{i\in I}$ are two sequences in $\sum_{i\in I}\oplus W_{i}$ and $\sum_{i\in I}\oplus S_{W}^{-1}W_{i}$, respectively. Also, let
$\sigma\subset I$ then
\begin{eqnarray*}
&&\left\Vert \sum_{i\in \sigma}\omega_{i}f_{i}+\sum_{i\in \sigma^{c}}\omega_{i}S_{W}^{-1}g_{i}\right\Vert^{2}\\
&=&\left\Vert \sum_{i\in \sigma}\omega_{i}f_{i}\right\Vert^{2}+\left\Vert \sum_{i\in \sigma^{c}}\omega_{i}S_{W}^{-1}g_{i}\right\Vert^{2}+2Re \left\langle \sum_{i\in \sigma}\omega_{i}f_{i}, \sum_{i\in \sigma^{c}}\omega_{i}S_{W}^{-1}g_{i}\right\rangle\\
&=&\left\Vert \sum_{i\in \sigma}\omega_{i}f_{i}\right\Vert^{2}+\left\Vert \sum_{i\in \sigma^{c}}\omega_{i}S_{W}^{-1}g_{i}\right\Vert^{2}\\
&\geq&\alpha_{\sigma} \sum_{i\in \sigma}\left\Vert f_{i}\right\Vert^{2}+ \alpha^{'}_{\sigma} \sum_{i\in \sigma^{c}}\left\Vert S_{W}^{-1}g_{i}\right\Vert^{2}\\
&\geq&\beta_{\sigma}\left(  \sum_{i\in \sigma}\left\Vert f_{i}\right\Vert^{2}+  \sum_{i\in \sigma^{c}}\left\Vert S_{W}^{-1}g_{i}\right\Vert^{2}\right),
\end{eqnarray*}
where $\alpha_{\sigma}$, $\alpha^{'}_{\sigma}$ are the respective lower Riesz bounds of $\{(W_{i},\omega_{i})\}_{i\in \sigma}$ and $\{(S_{W}^{-1}W_{i},\omega_{i})\}_{i\in \sigma^{c}}$, and $\beta_{\sigma}=min\left\{\alpha_{\sigma}, \alpha^{'}_{\sigma}\right\}$.
This implies that , $\{W_{i}\}_{i\in \sigma}\cup\{S_{W}^{-1} W_{i}\}_{i\in \sigma^{c}}$ is a fusion Riesz  sequence.  Now, suppose that $f\in \mathcal{H}$ so that $f\perp \{W_{i}\}_{i\in \sigma}\cup\{S_{W}^{-1} W_{i}\}_{i\in \sigma^{c}}$ then
\begin{eqnarray*}
\Vert f\Vert^{2} &=& \langle f, f\rangle = \left\langle f,  \sum_{i\in I}\omega_{i}S_{W}^{-1}\pi_{W_{i}}f \right\rangle \\
&=& \left\langle f,  \sum_{i\in \sigma}\omega_{i}S_{W}^{-1}\pi_{W_{i}}f \right\rangle+ \sum_{i\in \sigma^{c}}\left\langle \pi_{S_{W}^{-1}W_{i}} f,  \omega_{i}S_{W}^{-1}\pi_{W_{i}}f \right\rangle =0.
 \end{eqnarray*}
Hence, $\{W_{i}\}_{i\in
\sigma}\cup\{S_{W}^{-1}W_{i}\}_{i\in \sigma^{c}}$ is a complete sequence for all $\sigma\subseteq I$. Thus, $W$ and $S_{W}^{-1} W$ are weakly woven and so are woven with a universal lower bound $\mathcal{C}$ by Theorem \ref{weakly equ}.
Now, let  $V=\{(V_{i},\omega_{i})\}_{i\in I}$  be a dual fusion frame of $W$. Then
 for all $i\in I$ we have  $S_{W}^{-1} W_{i}\subset V_{i}$,  by Corollary 2.6 of \cite{arefi1}. So,
\begin{eqnarray*}
\mathcal{H}= \overline{\operatorname{span}}\{W_{i}\}_{i\in \sigma}\cup\{S_{W}^{-1}W_{i}\}_{i\in \sigma^{c}} \subseteq  \overline{\operatorname{span}}\{W_{i}\}_{i\in \sigma}\cup\{V_{i}\}_{i\in \sigma^{c}},
\end{eqnarray*}
for every $\sigma\subseteq I$. Moreover,
\begin{eqnarray*}
&&\sum_{i\in \sigma}\omega_{i}^{2}\Vert\pi_{W_{i}}f\Vert^{2}+\sum_{i\in \sigma^{c}}\omega_{i}^{2}\Vert\pi_{V_{i}}f \Vert^{2}\\
&\geq& \sum_{i\in \sigma}\omega_{i}^{2}\Vert \pi_{W_{i}}f\Vert^{2}+\sum_{i\in \sigma^{c}}\omega_{i}^{2}\Vert\pi_{S_{W}^{-1}W_{i}}f\Vert^{2}\geq \mathcal{C} \Vert f\Vert^{2}. \\
\end{eqnarray*}
Thus, $W$ and $V$ are woven, as desired.
\end{proof}
Applying the above theorem and with the aid of Theorem 2.5 of \cite{arefi1} we immediately obtain the next result.
\begin{cor}\label{psivw}
Let $W=\{(W_{i},\omega_{i})\}_{i\in I}$ be a  fusion Riesz basis  for $\mathcal{H}$ and $V=\{(V_{i},\omega_{i})\}_{i\in I}$ be an approximate dual fusion frame of $W$. Then $W$ and $\lbrace (\psi_{vw}^{-1} V_{i},
\omega_{i})\rbrace_{i\in I}$  are woven, in which $\psi_{vw} =  \sum_{i\in I}  \omega_{i}^{2}\pi_{V_{i}} S_{W}^{-1} \pi_{W_{i}}$.
\end{cor}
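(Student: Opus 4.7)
The plan is to reduce the corollary to Theorem \ref{alter dual} by upgrading $\lbrace(\psi_{vw}^{-1}V_{i},\omega_{i})\rbrace_{i\in I}$ from an object built out of an \emph{approximate} dual into a \emph{genuine} dual of $W$. The bridge is Theorem 2.5 of \cite{arefi1}, which (given the definition of $\psi_{vw}$ used in this paper) states that whenever $V$ is an approximate dual of a fusion frame $W$, the operator $\psi_{vw}=\sum_{i\in I}\omega_{i}^{2}\pi_{V_{i}}S_{W}^{-1}\pi_{W_{i}}$ satisfies $\Vert I_{\mathcal{H}}-\psi_{vw}\Vert<1$, is therefore invertible, and the family $\lbrace(\psi_{vw}^{-1}V_{i},\omega_{i})\rbrace_{i\in I}$ is a (true) dual fusion frame of $W$ in the sense of \eqref{Def:alt}.

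Given this, I would proceed in two steps. First, I would quote Theorem 2.5 of \cite{arefi1} to obtain the invertibility of $\psi_{vw}$ together with the reconstruction identity
\[
f=\sum_{i\in I}\omega_{i}^{2}\,\pi_{\psi_{vw}^{-1}V_{i}}\,S_{W}^{-1}\,\pi_{W_{i}}f,\qquad (f\in\mathcal{H}),
\]
which identifies $\lbrace(\psi_{vw}^{-1}V_{i},\omega_{i})\rbrace_{i\in I}$ as a dual fusion frame of $W$. Second, since $W$ is assumed to be a fusion Riesz basis, Theorem \ref{alter dual} guarantees that $W$ is woven with every one of its duals; applying this with the particular dual just constructed yields the conclusion.

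There is no substantive obstacle here: the whole content of the corollary is already packaged into the previous two results. The only thing requiring a moment of care is that Theorem 2.5 of \cite{arefi1} produces a dual whose subspaces are exactly $\psi_{vw}^{-1}V_{i}$, carrying the same weights $\omega_{i}$, which is immediate once $\psi_{vw}$ is invertible so that $\psi_{vw}^{-1}V_{i}$ is indeed a closed subspace of $\mathcal{H}$.
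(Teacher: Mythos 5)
Your proposal matches the paper's argument exactly: the paper derives the corollary by combining Theorem 2.5 of \cite{arefi1} (which shows $\psi_{vw}$ is invertible and $\lbrace(\psi_{vw}^{-1}V_{i},\omega_{i})\rbrace_{i\in I}$ is a genuine dual of $W$) with Theorem \ref{alter dual}. Nothing further is needed.
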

\begin{ex}
Consider
\begin{eqnarray*}
W_{1} = \mathbb{R}^{2} \times \{0\}, \quad W_{2} = {\textit{span}}\{(0,0,1)\}.
\end{eqnarray*}
Then $W = \lbrace W_{i}\rbrace_{i=1}^{2}$ is a fusion Riesz basis of $\mathbb{R}^{3}$. Also, let
\begin{eqnarray*}
V_{1} = \mathbb{R}^{2} \times \{0\}, \quad V_{2} = {\textit{span}}\{(0,1/2,1)\}.
\end{eqnarray*}
Since $\Vert I_{\mathbb{R}^{3}}-\psi_{vw} \Vert < 1$ so
 $V = \lbrace V_{i}\rbrace_{i=1}^{2}$ is an approximate dual fusion frame of $W$. A straightforward computation shows that
\begin{equation*}
\psi_{vw}^{-1}= \left[
 \begin{array}{ccc}

1 \qquad 0 \qquad 0\\

0 \qquad 1 \qquad 0\\

0 \quad -1/2 \quad 5/4\\

\end{array} \right].
\end{equation*}
Therefore,
\begin{eqnarray*}
\psi_{vw}^{-1} V_{1} = {\textit{span}}\{(1,0,0), (0,1,-1/2)\}, \quad \psi_{vw}^{-1} V_{2} = {\textit{span}}\{(0,1/2,1)\},
\end{eqnarray*}
and consequently $W$ is woven with $\lbrace \psi_{vw}^{-1} V_{i}\rbrace_{i=1}^{2}$, by Corollary \ref{psivw}.
\end{ex}
The next result,  gives a sufficient condition, under which a fusion frame and its dual are woven. The proof is similar to Theorem \ref{alter dual} so we regardless of the proof only  state the result.
\begin{cor}
Let $W=\{(W_{i},\omega_{i})\}_{i\in I}$ be a  fusion frame for $\mathcal{H}$ and $V=\{(V_{i},\omega_{i})\}_{i\in I}$ be a dual fusion frame of $W$ so that $\{W_{i}\}_{i\in \sigma}\cup\{V_{i}\}_{i\in \sigma^{c}}$ is a fusion frame  sequence for all $\sigma\subset I$. Then $W$ and $V$ are woven.
\end{cor}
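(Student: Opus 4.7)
The plan is to mimic the template of Theorem \ref{alter dual}, reducing the problem to weak weaving via Theorem \ref{weakly equ} and then verifying, for each partition, that the associated weaving is a fusion frame for $\mathcal{H}$. By hypothesis, for every $\sigma \subset I$ the family $\{W_i\}_{i\in\sigma}\cup\{V_i\}_{i\in\sigma^c}$ is already a fusion frame sequence, so it carries a positive lower frame bound on its closed span; a uniform Bessel bound is also automatic since
\begin{eqnarray*}
\sum_{i\in\sigma}\omega_i^2\|\pi_{W_i}f\|^2+\sum_{i\in\sigma^c}\omega_i^2\|\pi_{V_i}f\|^2 \leq (D_W+D_V)\|f\|^2.
\end{eqnarray*}
Consequently, the sole remaining task is to show completeness, namely that $\overline{\operatorname{span}}\bigl(\{W_i\}_{i\in\sigma}\cup\{V_i\}_{i\in\sigma^c}\bigr)=\mathcal{H}$ for every $\sigma$.

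For completeness I would argue exactly as in the proof of Theorem \ref{alter dual}. Suppose $f\in\mathcal{H}$ satisfies $\pi_{W_i}f=0$ for all $i\in\sigma$ and $\pi_{V_i}f=0$ for all $i\in\sigma^c$. Since $V$ is a dual of $W$ and both sequences carry the same weights $\omega_i$, the reconstruction identity becomes
\begin{eqnarray*}
f=\sum_{i\in I}\omega_i^2\,\pi_{V_i}S_W^{-1}\pi_{W_i}f.
\end{eqnarray*}
Taking the inner product with $f$ and splitting the sum across $\sigma$ and $\sigma^c$ gives
\begin{eqnarray*}
\|f\|^2 = \sum_{i\in\sigma}\omega_i^2\langle\pi_{V_i}f,\,S_W^{-1}\pi_{W_i}f\rangle + \sum_{i\in\sigma^c}\omega_i^2\langle\pi_{V_i}f,\,S_W^{-1}\pi_{W_i}f\rangle.
\end{eqnarray*}
The first sum vanishes because $\pi_{W_i}f=0$ for $i\in\sigma$, and the second vanishes because $\pi_{V_i}f=0$ for $i\in\sigma^c$. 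Hence $f=0$, which is exactly the completeness statement. This forces each weaving to be a fusion frame for $\mathcal{H}$, so $W$ and $V$ are weakly woven, and Theorem \ref{weakly equ} then upgrades this to woven.

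The main subtlety lies in the completeness step, which is the only place the duality relation enters and which in turn is the reason for imposing the fusion frame sequence hypothesis on every weaving: without it, one would be forced to separately secure a uniform positive lower bound on the weavings, a condition that generally fails outside the Riesz basis setting of Theorem \ref{alter dual}. Once completeness is in hand, the passage from the weaving-by-weaving bounds to a single pair of universal bounds is handled entirely by Theorem \ref{weakly equ}, so no further analytic work is needed.
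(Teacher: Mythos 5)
Your proposal is correct and takes essentially the approach the paper intends: the paper omits the proof, stating only that it is ``similar to Theorem \ref{alter dual}'', and your argument is exactly that adaptation --- the fusion frame sequence hypothesis replaces the Riesz-sequence step, and the completeness of each weaving is extracted from the reconstruction identity $f=\sum_{i}\omega_i^2\pi_{V_i}S_W^{-1}\pi_{W_i}f$ by splitting over $\sigma$ and $\sigma^c$, after which Theorem \ref{weakly equ} upgrades weak weaving to weaving.
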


\section{Weaving fusion frames and operator perturbations}
Linear perturbation of fusion frames, that is obtaining  some conditions on a fusion frame $W=\{(W_{i},\omega_{i})\}_{i\in I}$ and a linear operator $T\in B(\mathcal{H})$ so that  $\{(\overline{TW_{i}},\omega_{i})\}_{i\in I}$ constitutes a fusion frame for $\mathcal{H}$ or $T(\mathcal{H})$,  is one of striking problems in fusion frame setting. Some results in this issue can be found in \cite{As, Gav07, Li, Ruiz}.
In this section, we obtain some new results on  fusion frames and weaving fusion frames under operator perturbations. To this end, we need to the notion of  Friedrichs angle between two closed subspaces, reduced minimum modulus of   bounded linear operators and some basic results.
\begin{defn}
Given two  closed subspaces $M$ and $N$ of a Hilbert space $\mathcal{H}$, the angle between $M$ and $N$ is the angle in $[0, \pi/2]$ whose cosine is defined by
\begin{eqnarray*}
c(M,N)=\sup\{\vert \langle x, y \rangle\vert : \quad x\in M\ominus N, y\in N\ominus M, \Vert x\Vert=\Vert y\Vert =1\}.
\end{eqnarray*}
\end{defn}
Also, for an operator $T\in B(\mathcal{H})$ its reduced minimum modulus is defined by
\begin{eqnarray*}
\gamma(T)=\inf\{\Vert Tx\Vert: \quad \Vert x\Vert=1, x\in N(T)^{\perp}\}.
\end{eqnarray*}
It is well known that $\gamma(T)>0$ if and only if $T$ is a closed range operator and in this case
$\gamma(T)=\gamma(T^{*})=\gamma(T^{*}T)^{1/2}=\Vert T^{\dagger}\Vert^{-1}$, where $T^{\dagger}$ is the Moore-Penrose pseudoinverse \cite{Ding}. The next result determines an important connection between
angles and reduced minimum modulus of closed range operators.
\begin{prop}\label{modulus}\cite{Antezana}
Suppose $T\in B(\mathcal{H})$ is a closed range operator and $V$ is  a closed subspace of $\mathcal{H}$. Also, let $c:=c(N(T), V)<1$. Then
\begin{eqnarray*}
\gamma(T)(1-c^{2})^{1/2}\leq \gamma(T\pi_{V}) \leq \Vert T\Vert(1-c^{2})^{1/2}.
\end{eqnarray*}
\end{prop}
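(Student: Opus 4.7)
The plan is to reduce $\gamma(T\pi_V)$ to an infimum over the subspace $V_0 := V \ominus (N(T)\cap V)$, and then bound $\|Tv\|$ for $v\in V_0$ by splitting $v$ orthogonally with respect to $N(T)$ and using the Friedrichs angle to control the $N(T)$-component.

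First I would identify $N(T\pi_V)^\perp$. Writing any $x\in\mathcal{H}$ as $\pi_V x + (x-\pi_V x)$, one sees $T\pi_V x=0$ iff $\pi_V x\in N(T)\cap V$; hence $N(T\pi_V)=(N(T)\cap V)\oplus V^{\perp}$ and $N(T\pi_V)^{\perp}=V\ominus(N(T)\cap V)=V_0$. Therefore
\[
\gamma(T\pi_V)=\inf\{\|Tv\|:\,v\in V_0,\,\|v\|=1\}.
\]

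Next, for $v\in V_0$ with $\|v\|=1$ I would decompose $v=v_N+v_R$ with $v_N:=\pi_{N(T)}v$ and $v_R:=\pi_{N(T)^{\perp}}v$. A short check shows $v_N\in N(T)\ominus V$, since for $u\in N(T)\cap V$ one has $\langle v_N,u\rangle=\langle v,u\rangle=0$ by definition of $V_0$. The standard equivalent form of the Friedrichs angle then gives $\|v_N\|=\|\pi_{N(T)}v\|\leq c\|v\|=c$, whence $\|v_R\|^{2}=1-\|v_N\|^{2}\geq 1-c^{2}$. Because $T$ has closed range and $v_R\in N(T)^{\perp}$, the definition of $\gamma(T)$ and boundedness of $T$ yield
\[
\gamma(T)\|v_R\|\leq \|Tv_R\|=\|Tv\|\leq \|T\|\|v_R\|.
\]
Combined with $\sqrt{1-c^{2}}\leq \|v_R\|\leq 1$, taking the infimum over $v\in V_0$ immediately produces the lower bound $\gamma(T\pi_V)\geq \gamma(T)(1-c^{2})^{1/2}$.

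For the upper bound I would exhibit an approximating sequence: by the very definition of $c=c(N(T),V)$, there exist unit vectors $x_n\in N(T)\ominus V$ and $y_n\in V\ominus N(T)=V_0$ with $|\langle x_n,y_n\rangle|\to c$. Since $|\langle x_n,y_n\rangle|\leq \|\pi_{N(T)}y_n\|\leq c$, we get $\|\pi_{N(T)}y_n\|\to c$, hence $\|(y_n)_R\|\to \sqrt{1-c^{2}}$, and so $\|Ty_n\|=\|T(y_n)_R\|\leq \|T\|\|(y_n)_R\|\to \|T\|(1-c^{2})^{1/2}$. This gives $\gamma(T\pi_V)\leq \|T\|(1-c^{2})^{1/2}$.

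The one step that merits care is the passage from the "symmetric" definition of the Friedrichs angle recorded in the preceding Definition to the one-sided inequality $\|\pi_{N(T)}y\|\leq c\|y\|$ for all $y\in V\ominus N(T)$; this is a standard equivalent formulation (essentially $c=\|\pi_{N(T)}\pi_V-\pi_{N(T)\cap V}\|$), but it is the only nonroutine ingredient and should either be justified from the definition or cited from \cite{Ding}. Everything else is a straightforward orthogonal decomposition together with the closed-range characterization $\gamma(T)=\inf\{\|Tz\|:z\in N(T)^{\perp},\|z\|=1\}$.
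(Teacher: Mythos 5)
The paper offers no proof of this proposition at all: it is quoted from \cite{Antezana}, so there is nothing in the text to compare your argument against, and a self-contained proof like yours is genuinely additional content. Your argument is correct and is essentially the standard one for this result. The identification $N(T\pi_V)=(N(T)\cap V)\oplus V^{\perp}$, hence $N(T\pi_V)^{\perp}=V\ominus(N(T)\cap V)=V_0$, is right; the splitting $v=v_N+v_R$ with $Tv=Tv_R$ and $\gamma(T)\Vert v_R\Vert\le\Vert Tv\Vert\le\Vert T\Vert\,\Vert v_R\Vert$ carries both bounds; and the approximating sequence for the upper estimate is sound. The one step you flagged, namely $\Vert\pi_{N(T)}y\Vert\le c\Vert y\Vert$ for $y\in V_0$, does follow directly from the definition recorded in the paper: for a unit vector $w\in N(T)$ write $w=w_1+w_2$ with $w_1\in N(T)\cap V$ and $w_2\in N(T)\ominus V$; then $\langle y,w\rangle=\langle y,w_2\rangle$, so $\vert\langle y,w\rangle\vert\le c\Vert y\Vert\,\Vert w_2\Vert\le c\Vert y\Vert$, and taking the supremum over such $w$ gives the claim --- so no external citation is needed, provided one reads $M\ominus N$ as $M\cap(M\cap N)^{\perp}$, which is the convention of \cite{Antezana}. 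The only loose ends are the degenerate cases $N(T)\ominus V=\{0\}$ or $V_0=\{0\}$, where the supremum defining $c$ runs over the empty set and your sequences $x_n,y_n$ do not exist; there $c=0$ by convention and the inequalities are either trivial ($\gamma(T\pi_V)\le\Vert T\Vert$ whenever $V_0\neq\{0\}$) or vacuous, and a sentence dispatching them would make the proof airtight.
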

\begin{lem}\label{lem3}\cite{Gav07}
Let   $\mathcal{H}$ be a Hilbert spaces and $T\in B(\mathcal{H})$. Also, let $V$ be a closed subspace of $\mathcal{H}$. Then
\begin{eqnarray*}
\pi_{V}T^{*}=\pi_{V}T^{*}\pi_{\overline{TV}}.
\end{eqnarray*}
\end{lem}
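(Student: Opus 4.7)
The plan is to reduce the claimed identity to showing that $\pi_V T^{\ast}$ annihilates $(\overline{TV})^{\perp}$. Once this is established, decomposing an arbitrary $f\in\mathcal{H}$ as $f = \pi_{\overline{TV}}f + \pi_{(\overline{TV})^{\perp}}f$ and applying $\pi_V T^{\ast}$ to both sides yields the result immediately, since the second term contributes zero.

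The key step is therefore to verify that for every $y\in(\overline{TV})^{\perp}$ one has $T^{\ast}y\in V^{\perp}$. For this, I would fix such a $y$ and an arbitrary $v\in V$. Since $Tv\in\overline{TV}$ and $y\perp\overline{TV}$, we have $\langle Tv,y\rangle = 0$, and by definition of the adjoint this equals $\langle v,T^{\ast}y\rangle$. Because $v\in V$ was arbitrary, $T^{\ast}y\in V^{\perp}$, so $\pi_V T^{\ast}y = 0$. Equivalently, $\pi_V T^{\ast}\pi_{(\overline{TV})^{\perp}} = 0$.

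Combining this with the orthogonal decomposition $I_{\mathcal{H}} = \pi_{\overline{TV}} + \pi_{(\overline{TV})^{\perp}}$ gives
\[
\pi_V T^{\ast} = \pi_V T^{\ast}\bigl(\pi_{\overline{TV}} + \pi_{(\overline{TV})^{\perp}}\bigr) = \pi_V T^{\ast}\pi_{\overline{TV}},
\]
which is exactly the claim.

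There is no real obstacle here; the only point that requires a little care is the passage from $y\in(\overline{TV})^{\perp}$ to $y\perp Tv$ for every $v\in V$, which is valid because $TV\subseteq\overline{TV}$, and then invoking continuity of the inner product is unnecessary since we only test against elements of $TV$ itself. The argument uses nothing beyond the definition of the adjoint and of the orthogonal projection onto a closed subspace.
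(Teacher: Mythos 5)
Your proof is correct: showing $T^{\ast}y\in V^{\perp}$ for $y\in(\overline{TV})^{\perp}$ and then splitting the identity as $\pi_{\overline{TV}}+\pi_{(\overline{TV})^{\perp}}$ is exactly the standard argument for this lemma. The paper itself gives no proof (it only cites \cite{Gav07}), and your argument coincides with the one in that reference, so there is nothing to add.
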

Now, we are ready to state a necessary  and sufficient condition, under which image of  a  bounded operator on a given  family of closed subspaces  is a fusion frame for $\mathcal{H}$.
\begin{thm}\label{operator1}
Let $\{ W_{i}\}_{i\in I}$ be a family of closed subspaces in $\mathcal{H}$, $\{\omega_{i}\}_{i\in I}$ a family of weights
and $T\in B(\mathcal{H})$ be a closed range operator. Then the following conditions are equivalent;
\begin{itemize}
\item[(i)] The family $\{(T^{\dagger}TW_{i},\omega_{i})\}_{i\in I}$   is a  fusion frame  for $R(T^{*})$.
\item[(ii)] The family $\{(TW_{i},\omega_{i})\}_{i\in I}$   is a  fusion frame  for $\mathcal{H}$.
\end{itemize}
\end{thm}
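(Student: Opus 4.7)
The plan is to reduce both implications to a single pointwise estimate: for every $i\in I$ and $f\in\mathcal{H}$,
\begin{equation*}
\gamma(T)^{2}\,\|\pi_{TW_{i}}f\|^{2}\;\le\;\|\pi_{T^{\dagger}TW_{i}}T^{*}f\|^{2}\;\le\;\|T\|^{2}\,\|\pi_{TW_{i}}f\|^{2}.
\end{equation*}
Once this is in place, multiplying by $\omega_{i}^{2}$ and summing over $i$ identifies $\sum_{i}\omega_{i}^{2}\|\pi_{TW_{i}}f\|^{2}$ with $\sum_{i}\omega_{i}^{2}\|\pi_{V_{i}}T^{*}f\|^{2}$ (where $V_{i}:=T^{\dagger}TW_{i}$) up to the fixed constants $\gamma(T)^{2}$ and $\|T\|^{2}$, and the isomorphism $T^{*}|_{R(T)}\colon R(T)\to R(T^{*})$ (available because $T$ has closed range) transports the fusion frame inequalities between $\mathcal{H}$ and $R(T^{*})$.

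First I would record the structural identities. Since $T$ has closed range, $T^{\dagger}T=\pi_{N(T)^{\perp}}$, hence $V_{i}=\pi_{N(T)^{\perp}}W_{i}\subseteq N(T)^{\perp}$; and because $T$ annihilates $N(T)$, $TV_{i}=TW_{i}$. Applying Lemma \ref{lem3} with $V=V_{i}$ then yields
\begin{equation*}
\pi_{V_{i}}T^{*}=\pi_{V_{i}}T^{*}\pi_{\overline{TV_{i}}}=\pi_{V_{i}}T^{*}\pi_{TW_{i}},
\end{equation*}
so in the middle term of the pointwise estimate only the component $\pi_{TW_{i}}f$ enters.

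Next I would invoke Proposition \ref{modulus} with $V=V_{i}$. The inclusion $V_{i}\subseteq N(T)^{\perp}$ forces the Friedrichs angle $c(N(T),V_{i})=0$, so
\begin{equation*}
\gamma(T)\;\le\;\gamma(T\pi_{V_{i}})\;\le\;\|T\pi_{V_{i}}\|\;\le\;\|T\|.
\end{equation*}
A direct check gives $N(T\pi_{V_{i}})=V_{i}^{\perp}$ (using $V_{i}\cap N(T)=\{0\}$) and $R(T\pi_{V_{i}})=TV_{i}=TW_{i}$, which is closed because $T|_{V_{i}}$ is bounded below by $\gamma(T)$. Since $(T\pi_{V_{i}})^{*}=\pi_{V_{i}}T^{*}$ and $(T\pi_{V_{i}})^{*}=(T\pi_{V_{i}})^{*}\pi_{TW_{i}}$, the previous bounds produce $\gamma(T)\|\pi_{TW_{i}}f\|\le\|\pi_{V_{i}}T^{*}f\|\le\|T\|\|\pi_{TW_{i}}f\|$, which is the desired pointwise estimate.

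To finish, I would use that $T^{*}\colon R(T)\to R(T^{*})$ is an isomorphism with $\gamma(T)\|h\|\le\|T^{*}h\|\le\|T\|\|h\|$. For \textit{(ii)}$\Rightarrow$\textit{(i)}, given $g\in R(T^{*})$ I pick $h\in R(T)$ with $T^{*}h=g$, apply the summed pointwise estimate, and use the fusion frame bounds of $\{(TW_{i},\omega_{i})\}$ to recover lower and upper bounds comparable to $\|g\|^{2}$; for \textit{(i)}$\Rightarrow$\textit{(ii)}, given $f\in\mathcal{H}$ I set $g=T^{*}f\in R(T^{*})$ and use $\pi_{TW_{i}}f=\pi_{TW_{i}}\pi_{\overline{R(T)}}f$ to reduce to the same situation. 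I expect the sharp lower-bound portion of the third step to be the main technical obstacle: one has to identify $(T\pi_{V_{i}})^{*}$ correctly, pin down its null space and range, and combine the zero Friedrichs-angle observation with Proposition \ref{modulus}; once this estimate is available, the summation and the isomorphism argument are essentially bookkeeping.
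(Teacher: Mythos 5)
Your proposal is correct and follows essentially the same route as the paper: the zero Friedrichs angle $c(N(T),T^{\dagger}TW_{i})=0$ combined with Proposition \ref{modulus} to bound $\gamma(T\pi_{T^{\dagger}TW_{i}})$ from below by $\gamma(T)$, Lemma \ref{lem3} to insert $\pi_{TW_{i}}$, and the resulting two-sided pointwise estimate summed over $i$. The only difference is that you spell out the final transport between $\mathcal{H}$ and $R(T^{*})$ via the isomorphism $T^{*}|_{R(T)}$, a bookkeeping step the paper leaves implicit in its concluding sentence.
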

\begin{proof}
First note that $T^{\dagger}TW_{i}$ is a closed subspace of $R(T^{*})$, for all $i\in I$. Moreover, for every $f\in N(T)$ and $g_{i}\in T^{\dagger}TW_{i}$ we obtain
\begin{eqnarray*}
\vert \langle f, g_{i}\rangle\vert = \vert \langle f, \pi_{R(T^{*})}g_{i}\rangle\vert = \vert \langle \pi_{R(T^{*})}f, g_{i}\rangle\vert =0,
\end{eqnarray*}
and so $c(N(T), T^{\dagger}TW_{i})=0$. Hence, by Proposition \ref{modulus}
\begin{eqnarray*}
\gamma(\pi_{T^{\dagger}TW_{i}}T^{*}) = \gamma(T\pi_{T^{\dagger}TW_{i}})\geq \gamma(T)>0,
\end{eqnarray*}
 for all $i\in I$, which implies that $ \gamma:=\inf_{i\in I}\gamma(\pi_{T^{\dagger}TW_{i}}T^{*})>0$. Now, suppose $f\in \mathcal{H}$ then by using Lemma \ref{lem3}
\begin{eqnarray*}
\sum_{i\in I}\omega_{i}^{2}\Vert \pi_{T^{\dagger}TW_{i}}T^{*}f\Vert^{2}&=& \sum_{i\in I}\omega_{i}^{2}\Vert \pi_{T^{\dagger}TW_{i}}T^{*}\pi_{TW_{i}}f\Vert^{2}\\
&\geq& \sum_{i\in I}\omega_{i}^{2}(\gamma(\pi_{T^{\dagger}TW_{i}}T^{*})^{2}\Vert \pi_{TW_{i}}f\Vert^{2}\\
&\geq&  \gamma^{2}\sum_{i\in I}\omega_{i}^{2}\Vert \pi_{TW_{i}}f\Vert^{2},
\end{eqnarray*}
where the first inequality comes from the fact that $\pi_{TW_{i}}f\in N(\pi_{T^{\dagger}TW_{i}}T^{*})^{\perp}$, for all $i\in I$. More precisely,  if $g\in N(\pi_{T^{\dagger}TW_{i}}T^{*})$ then $T^{*}g\in (\pi_{R(T^{*})}W_{i})^{\perp}$ and so for every $g_{i}\in W_{i}$
\begin{eqnarray*}
0=\langle T^{*}g,  \pi_{R(T^{*})}g_{i}\rangle=  \langle T^{*}g,  g_{i}\rangle= \langle g,  Tg_{i}\rangle.
\end{eqnarray*}
This implies that, for every $g\in N(\pi_{T^{\dagger}TW_{i}}T^{*})$  and $f\in \mathcal{H}$ we have that $\langle \pi_{TW_{i}}f,  g\rangle = 0$.  Moreover,  note that $\overline{TW_{i}}=TW_{i}$ is due to $\gamma(T\pi_{T^{\dagger}TW_{i}})>0$, for all $i\in I$.
On the other hand,
\begin{eqnarray*}
\sum_{i\in I}\omega_{i}^{2}\Vert \pi_{T^{\dagger}TW_{i}}T^{*}f\Vert^{2}&=& \sum_{i\in I}\omega_{i}^{2}\Vert \pi_{T^{\dagger}TW_{i}}T^{*}\pi_{TW_{i}}f\Vert^{2}\\
&\leq& \Vert T\Vert^{2}\sum_{i\in I}\omega_{i}^{2}\Vert \pi_{TW_{i}}f\Vert^{2}.
\end{eqnarray*}
Thus, $\{(T^{\dagger}TW_{i},\omega_{i})\}_{i\in I}$   is a  fusion frame  for $R(T^{*})$ if and only if $\{(TW_{i},\omega_{i})\}_{i\in I}$   is a  fusion frame  for $\mathcal{H}$.
\end{proof}
The following corollary is an immediate result of Theorem \ref{operator1}.
\begin{cor}
Suppose $\{ W_{i}\}_{i\in I}$ is  a family of closed subspaces in $\mathcal{H}$, $\{\omega_{i}\}_{i\in I}$ a family of weights and $T\in B(\mathcal{H})$ is a one to one and  closed range operator. Then $\{(W_{i},\omega_{i})\}_{i\in I}$   is a  fusion frame  for $\mathcal{H}$ if and only if $\{(TW_{i},\omega_{i})\}_{i\in I}$   is a  fusion frame  for $\mathcal{H}$.
\end{cor}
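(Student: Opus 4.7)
The plan is to obtain this result as a direct specialization of Theorem \ref{operator1}, in which the hypothesis that $T$ is one-to-one collapses the auxiliary subspaces $T^{\dagger}TW_{i}$ onto $W_{i}$ and identifies $R(T^{*})$ with the whole space $\mathcal{H}$.

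First I would record the two standard facts from the theory of the Moore--Penrose pseudoinverse that are needed. Because $T$ is closed range, $T^{\dagger}$ exists and $T^{\dagger}T = \pi_{N(T)^{\perp}}$; because $T$ is one-to-one, $N(T)=\{0\}$, so $N(T)^{\perp}=\mathcal{H}$ and therefore $T^{\dagger}T = I_{\mathcal{H}}$. From the same closed range hypothesis together with the orthogonal decomposition $\mathcal{H}=N(T)\oplus R(T^{*})$ one also gets $R(T^{*})=N(T)^{\perp}=\mathcal{H}$.

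Next I would substitute these two observations into the conclusion of Theorem \ref{operator1}. The family $\{(T^{\dagger}TW_{i},\omega_{i})\}_{i\in I}$ becomes $\{(W_{i},\omega_{i})\}_{i\in I}$, and the ambient space $R(T^{*})$ in part (i) becomes $\mathcal{H}$. Thus the equivalence from Theorem \ref{operator1} reads: $\{(W_{i},\omega_{i})\}_{i\in I}$ is a fusion frame for $\mathcal{H}$ if and only if $\{(TW_{i},\omega_{i})\}_{i\in I}$ is a fusion frame for $\mathcal{H}$, which is exactly the corollary.

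There is really no main obstacle here beyond keeping the identifications straight; the content of the corollary is entirely contained in Theorem \ref{operator1} once one observes that injectivity plus closed range makes the compression $T^{\dagger}T$ trivial and the residual range $R(T^{*})$ equal to $\mathcal{H}$. If one wished to be even more explicit, the lower and upper fusion frame bounds transfer with the constants appearing implicitly in the proof of Theorem \ref{operator1}, namely bounds controlled by $\gamma(T)^{2}$ and $\|T\|^{2}$.
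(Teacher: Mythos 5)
Your proposal is correct and matches the paper's intent exactly: the paper states the corollary as an immediate consequence of Theorem \ref{operator1}, and your identifications $T^{\dagger}T=\pi_{N(T)^{\perp}}=I_{\mathcal{H}}$ and $R(T^{*})=N(T)^{\perp}=\mathcal{H}$ (both valid because $T$ is injective with closed range) are precisely the specialization needed.
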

\begin{prop}
For an invertible operator $T$ on $\mathcal{H}$ and  a family of woven  fusion frames $\{(W_{ij},\omega_{ij})\}_{j=1,i\in I}^{M}$ with universal  bounds $C$ and $D$, the family $\{(TW_{ij},\omega_{ij})\}_{j=1,i\in I}^{M}$ is also woven with universal bounds $ \dfrac{C}{\Vert T^{-1}\Vert^{2}\Vert T\Vert^{2}}$ and $D\Vert T^{-1}\Vert^{2}\Vert T\Vert^{2}$.
 \end{prop}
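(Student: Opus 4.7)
The plan is to establish a two-sided comparison between $\|\pi_{TW_{ij}} f\|$ and $\|\pi_{W_{ij}} T^{*} f\|$ for an invertible $T$, and then sum over any partition and sandwich between the universal woven bounds $C$ and $D$. The key auxiliary estimate I would prove first is: for any closed subspace $W$ of $\mathcal{H}$ and any $f\in\mathcal{H}$,
\begin{equation*}
\frac{1}{\|T\|}\,\|\pi_{W} T^{*} f\| \;\le\; \|\pi_{TW} f\| \;\le\; \|T^{-1}\|\,\|\pi_{W} T^{*} f\|.
\end{equation*}
This follows from the variational characterization $\|\pi_{TW} f\| = \sup_{y\in W, y\neq 0} |\langle T^{*}f, y\rangle|/\|Ty\|$ together with the two-sided bound $\|T^{-1}\|^{-1}\|y\| \le \|Ty\| \le \|T\|\|y\|$. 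Note that $TW$ is closed because $T$ is invertible, so the projection is well defined.

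Squaring and summing over $j\in[M]$ and $i\in\sigma_{j}$ for any partition $\{\sigma_{j}\}_{j=1}^{M}$ of $I$, I obtain
\begin{equation*}
\frac{1}{\|T\|^{2}} \sum_{j=1}^{M}\sum_{i\in\sigma_{j}} \omega_{ij}^{2}\,\|\pi_{W_{ij}} T^{*} f\|^{2}
\;\le\; \sum_{j=1}^{M}\sum_{i\in\sigma_{j}} \omega_{ij}^{2}\,\|\pi_{TW_{ij}} f\|^{2}
\;\le\; \|T^{-1}\|^{2} \sum_{j=1}^{M}\sum_{i\in\sigma_{j}} \omega_{ij}^{2}\,\|\pi_{W_{ij}} T^{*} f\|^{2}.
\end{equation*}
Since the family $\{(W_{ij},\omega_{ij})\}$ is woven with universal bounds $C$ and $D$, applying those bounds to the vector $T^{*}f\in\mathcal{H}$ yields
\begin{equation*}
C\,\|T^{*}f\|^{2} \;\le\; \sum_{j=1}^{M}\sum_{i\in\sigma_{j}} \omega_{ij}^{2}\,\|\pi_{W_{ij}} T^{*} f\|^{2} \;\le\; D\,\|T^{*}f\|^{2}.
\end{equation*}

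Finally, I invoke the invertibility of $T$ (equivalently of $T^{*}$) to bound $\|T^{*}f\|$ two-sidedly in terms of $\|f\|$: namely $\|T^{-1}\|^{-1}\|f\| \le \|T^{*}f\| \le \|T\|\|f\|$, using $(T^{*})^{-1}=(T^{-1})^{*}$. Combining the three chains of inequalities gives the universal lower bound $C/(\|T^{-1}\|^{2}\|T\|^{2})$ and upper bound $D\,\|T^{-1}\|^{2}\|T\|^{2}$ as required, for an \emph{arbitrary} partition, hence establishing the weaving property. The main obstacle is really just the initial two-sided estimate comparing projections onto $W$ and $TW$; once that is in hand, the rest is assembly and uses no structure beyond invertibility of $T$ and the definition of woven fusion frames.
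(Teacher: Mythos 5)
Your proof is correct and follows essentially the same route as the paper: both arguments fix an arbitrary partition, invoke the universal bounds $C$, $D$ for the weaving of the $W_{ij}$, and transfer them to the $T$-images at the cost of the factor $\Vert T\Vert^{2}\Vert T^{-1}\Vert^{2}$. The only difference is that the paper simply cites the known fact that an invertible operator maps a fusion frame with bounds $C$, $D$ to one with bounds $C/(\Vert T^{-1}\Vert^{2}\Vert T\Vert^{2})$ and $D\Vert T^{-1}\Vert^{2}\Vert T\Vert^{2}$, whereas you prove that comparison from scratch via the variational identity $\Vert \pi_{TW}f\Vert=\sup_{0\neq y\in W}\vert\langle T^{*}f,y\rangle\vert/\Vert Ty\Vert$, which is a correct and self-contained way to obtain the same estimate.
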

\begin{proof}
Suppose   $T$ is an invertible operator and  $W$ a fusion frame with    bounds $C$ and $D$ then it is known that  $TW$ is also a fusion frame  for $\mathcal{H}$ with bounds  $ \dfrac{C}{\Vert T^{-1}\Vert^{2}\Vert T\Vert^{2}}$ and $D\Vert T^{-1}\Vert^{2}\Vert T\Vert^{2}$, see \cite{Cas08}. Now, since for every partition $\{\sigma_{j}\}_{j=1}^{M}$ of $I$ the sequence  $\{(W_{ij},\omega_{ij})\}_{j=1,i\in \sigma_{j}}^{M}$ is a fusion frame for $\mathcal{H}$ with universal  bounds $C$ and $D$, so the family  $\{(TW_{ij},\omega_{ij})\}_{j=1,i\in \sigma_{j}}^{M}$ is also  a fusion frame for $\mathcal{H}$ with the given bounds.
\end{proof}
The above proposition is an extension of  Proposition 11 of \cite{lynch},
however  unlike discrete frames
 the operator $T$ can not be changed by a  closed range or even  onto linear operator. Indeed, in fusion frames onto
operators  may not
preserve Besselian property, see \cite{Ruiz} for more details.
In the next theorem we present  some sufficient conditions under which a fusion frame and its perturbed by a bounded invertible operator constitute woven fusion frames.
\begin{thm}\label{per1} Let $W=\lbrace
(W_{i},\omega_{i})\rbrace_{i\in I}$  be a  fusion frames for $\mathcal{H}$ with respective bounds $C_{W}$ and $D_{W}$, also let $T\in B(\mathcal{H})$ be an invertible operator. Then the following hold;
 \begin{itemize}
\item[(i)] If either $W_{i}\subset TW_{i}$ or $TW_{i}\subset W_{i}$, then  $W$ and $TW=\lbrace
(TW_{i},\omega_{i})\rbrace_{i\in I}$ are woven.
\item[(ii)] If $W_{i}\subset T^{*}TW_{i}$ and  $\Vert I_{\mathcal{H}}-T^{-1}\Vert < \dfrac{C_{W}}{D_{W}}$,
then $W$ and $TW$ are woven.
\item[(iii)] If $T$ is a unitary operator so that $TS_{\sigma}-S_{\sigma}T$ is a positive operator, for every $\sigma\subset I$. Then $W$ and $TW$ are woven, in which $S_{\sigma}$ denotes the fusion frame operator of $W$ on the index set  $\sigma\subset I$.
\end{itemize}
 \end{thm}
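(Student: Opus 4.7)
For any partition $\{\sigma,\sigma^{c}\}$ of $I$, I invoke the monotonicity $U\subseteq V\Rightarrow\|\pi_{U}f\|\leq\|\pi_{V}f\|$. Under $W_{i}\subseteq TW_{i}$ for all $i$, every weaving sum dominates $\sum_{i\in I}\omega_{i}^{2}\|\pi_{W_{i}}f\|^{2}\geq C_{W}\|f\|^{2}$, which gives the universal lower bound $C_W$ at once. The reverse case $TW_{i}\subseteq W_{i}$ is symmetric: I dominate by the lower fusion-frame sum of $TW=\{(TW_{i},\omega_{i})\}_{i\in I}$, which is itself a fusion frame with bound $C_{W}/(\|T^{-1}\|^{2}\|T\|^{2})$ by the proposition preceding this theorem. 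The universal upper Bessel bound in both cases is $D_{W}+D_{TW}$, as already remarked after the definition of weakly woven.

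\textbf{Plan for (iii).} For a unitary $T$ one has $\pi_{TV}=T\pi_{V}T^{*}$, so the fusion-frame operator of any weaving splits as
\[
\widetilde{S}_{\sigma}=S_{\sigma}+TS_{\sigma^{c}}T^{*}=TS_{W}T^{*}+(S_{\sigma}-TS_{\sigma}T^{*}).
\]
A short manipulation using $TT^{*}=I$ gives $S_{\sigma}-TS_{\sigma}T^{*}=-(TS_{\sigma}-S_{\sigma}T)T^{*}$; since the left-hand side is automatically self-adjoint, I will combine this identity with the positivity (hence self-adjointness) of $TS_{\sigma}-S_{\sigma}T$ to conclude $S_{\sigma}-TS_{\sigma}T^{*}\geq 0$. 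This yields $\widetilde{S}_{\sigma}\geq TS_{W}T^{*}\geq C_{W}\,TT^{*}=C_{W}I$ uniformly in $\sigma$, which is the desired universal lower bound; the universal upper bound is again $D_W+D_{TW}=2D_W$ since $T$ is unitary.

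\textbf{Plan for (ii), the main obstacle.} Invertibility of $T$ upgrades the hypothesis $W_{i}\subseteq T^{*}TW_{i}$ to the equality $W_{i}=T^{*}TW_{i}$, so each $W_{i}$ is $T^{*}T$-invariant and, equivalently, $(T^{-1})^{*}W_{i}=TW_{i}$. Using this identification, the trial vector $(T^{-1})^{*}\pi_{W_{i}}T^{-1}f\in TW_{i}$ inserted into the variational characterization of $\pi_{TW_i}$ produces the key estimate
\[
\|\pi_{TW_{i}}f\|\geq\frac{1}{\|T^{-1}\|}\|\pi_{W_{i}}T^{-1}f\|,\qquad f\in\mathcal{H}.
\]
Writing $T^{-1}f=f-(I-T^{-1})f$ and applying the $\ell^{2}$-triangle inequality on the $\sigma^{c}$-block yields
\[
\Bigl(\sum_{i\in\sigma^{c}}\omega_{i}^{2}\|\pi_{W_{i}}T^{-1}f\|^{2}\Bigr)^{1/2}\geq\Bigl(\sum_{i\in\sigma^{c}}\omega_{i}^{2}\|\pi_{W_{i}}f\|^{2}\Bigr)^{1/2}-\sqrt{D_{W}}\,\|I-T^{-1}\|\,\|f\|,
\]
where the Bessel bound $D_{W}$ controls the perturbation term. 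Adding the unperturbed $\sigma$-part, invoking the global inequality $\sum_{i\in I}\omega_{i}^{2}\|\pi_{W_{i}}f\|^{2}\geq C_{W}\|f\|^{2}$, and performing a two-case split according to whether $\sum_{i\in\sigma^{c}}\omega_{i}^{2}\|\pi_{W_{i}}f\|^{2}$ is small or large relative to $D_{W}\|I-T^{-1}\|^{2}\|f\|^{2}$, the hypothesis $\|I-T^{-1}\|<C_{W}/D_{W}$ (together with the automatic inequality $C_W\leq D_W$) will deliver a positive universal lower bound for the weaving sum. The genuinely delicate point is precisely this balancing step: the factor $\|T^{-1}\|$ coming from the projection estimate and the perturbation $\sqrt{D_{W}}\,\|I-T^{-1}\|$ must jointly be compatible with the stated ratio, and it is the sharpness of the invariance identity $(T^{-1})^{*}W_{i}=TW_{i}$ that makes this possible.
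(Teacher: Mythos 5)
Your part (i) coincides with the paper's argument: monotonicity of orthogonal projections over nested subspaces gives the universal lower bound $C_W$ (resp.\ the lower bound of $TW$) directly, so nothing further is needed there.

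Part (ii) is where you genuinely depart from the paper, and your route does close. The paper argues through local frames: it takes orthonormal bases $\{e_{i,j}\}_{j\in J_i}$ of the $W_i$, observes that $\{(T^{*})^{-1}e_{i,j}\}_{j\in J_i}$ are local frames for $TW_i$ (your remark that $W_i\subseteq T^{*}TW_i$ forces $W_i=T^{*}TW_i$, hence $(T^{*})^{-1}W_i=TW_i$, is precisely what legitimizes this step, which the paper leaves implicit), and then applies Proposition \ref{prop1.4} with $U=(T^{*})^{-1}$ --- noting $\Vert I-(T^{*})^{-1}\Vert=\Vert I-T^{-1}\Vert$ --- followed by Lemma \ref{1lem}. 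Your projection estimate $\Vert\pi_{TW_i}f\Vert\geq\Vert T^{-1}\Vert^{-1}\Vert\pi_{W_i}T^{-1}f\Vert$ is correct (it is the standard consequence of $\pi_{V}U^{*}=\pi_{V}U^{*}\pi_{\overline{UV}}$ with $U=(T^{*})^{-1}$), and the Minkowski-plus-two-cases balancing works: choosing $1<\lambda$ with $\lambda\Vert I-T^{-1}\Vert<\sqrt{C_W/D_W}$, which is possible because $\Vert I-T^{-1}\Vert<C_W/D_W\leq\sqrt{C_W/D_W}$, yields a universal lower bound $\min\bigl\{(C_W-\lambda^{2}D_W\Vert I-T^{-1}\Vert^{2}),\ \min(1,\Vert T^{-1}\Vert^{-2}(1-1/\lambda)^{2})C_W\bigr\}>0$. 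In effect you re-prove the fusion-frame analogue of Proposition \ref{prop1.4} from scratch; the paper's reduction to the discrete case is shorter, yours is self-contained and makes the role of the hypothesis $W_i\subseteq T^{*}TW_i$ transparent.

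Part (iii) contains a genuine gap. Your identity $S_{\sigma}-TS_{\sigma}T^{*}=-(TS_{\sigma}-S_{\sigma}T)T^{*}$ is correct, but the sign you then assert does not follow. Writing $P_{\sigma}=TS_{\sigma}-S_{\sigma}T\geq0$, self-adjointness of $P_{\sigma}T^{*}$ together with $P_{\sigma}\geq0$ only gives $(P_{\sigma}T^{*})^{2}=P_{\sigma}^{2}$, i.e.\ $-P_{\sigma}\leq P_{\sigma}T^{*}\leq P_{\sigma}$; it does not determine whether $P_{\sigma}T^{*}$ is positive or negative. Moreover, the inequality you need, $S_{\sigma}-TS_{\sigma}T^{*}\geq0$, is the \emph{reverse} of the one the paper's decomposition rests on: the paper writes $S_{\mathcal V}=S_W+(TS_{\sigma^{c}}T^{*}-S_{\sigma^{c}})\geq S_W$, which requires $P_{\sigma^{c}}T^{*}\geq0$. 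Since the hypothesis is imposed for every $\sigma\subset I$, your inequality and the paper's cannot both hold for all index sets unless $TS_{\sigma}=S_{\sigma}T$ identically. So the positivity claim at the heart of your plan for (iii) is both unproved and pointed in the wrong direction relative to the intended argument; this is the step that would fail as written.
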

\begin{proof}
First, we note that since $T$ is an invertible operator, so  $TW$ is a fusion frame  for $\mathcal{H}$.
To prove $(i)$   note that if  $W_{i}\subset TW_{i}$, then
\begin{eqnarray*}
\Vert f\Vert^{2}(D_{W}+D_{TW})&\geq&\sum_{i\in \sigma}\omega_{i}^{2}\Vert \pi_{W_{i}}f\Vert^{2}+\sum_{i\in \sigma^{c}}\omega_{i}^{2}\Vert\pi_{TW_{i}}f \Vert^{2}\\
&\geq& \sum_{i\in \sigma}\omega_{i}^{2}\Vert \pi_{W_{i}}f\Vert^{2}+\sum_{i\in \sigma^{c}}\omega_{i}^{2}\Vert \pi_{W_{i}}f\Vert^{2}\\
&\geq& C_{W}\Vert f\Vert^{2}.
\end{eqnarray*}
The case $TW_{i}\subset W_{i}$ can be  proved  similarly.
In order to show  $(ii)$ suppose $\{e_{i,j}\}_{j\in J_{i}}$ is an orthonormal basis of $W_{i}$, for all $i\in I$. Then $\{\omega_{i}e_{i,j}\}_{j\in J_{i}, i\in I}$ is a frame for $\mathcal{H}$ with respective bounds $C_{W}$ and $D_{W}$, \cite{Cas04}. Moreover, by the assumption  the sequences $\{(T^{*})^{-1}e_{i,j}\}_{j\in J_{i}}$ constitute local frames of $TW$ and so by   Proposition \ref{prop1.4} $\{\omega_{i}e_{i,j}\}_{j\in J_{i}, i\in I}$ and $\{\omega_{i}(T^{*})^{-1}e_{i,j}\}_{j\in J_{i}, i\in I}$  are woven. Thus, the result follows using Lemma \ref{1lem}.
 For proving $(iii)$, we note that since $T$ is a unitary operator so the fusion frame operator of $TW$ is $TS_{W}T^{*}$, see \cite{Gav07}. Hence, for every $\sigma \subset I$ and the weaving $\mathcal{V}=\lbrace
(W_{i},\omega_{i})\rbrace_{i\in \sigma}\cup \lbrace
(TW_{i},\omega_{i})\rbrace_{i\in \sigma^{c}}$ we obtain
\begin{eqnarray*}
S_{\mathcal{V}} &=& S_{\sigma}+TS_{\sigma^{c}}T^{*}\\
&=&S_{W}-S_{\sigma^{c}}+TS_{\sigma^{c}}T^{*}\\
&\geq& S_{W}.
\end{eqnarray*}
Thus $S_{\mathcal{V}} \geq S_{W}$ and this implies that $T^{*}_{\mathcal{V}}$ is an injective operator, i.e., $\mathcal{V}$ is a fusion frame,  which follows the result.
\end{proof}

\bibliographystyle{amsplain}

\begin{thebibliography}{10}



\bibitem{Antezana}
Antezana, J.,    Corach, G., Stojanoff, D. and Ruiz, M., Weighted projections and Riesz frames, Lin. Alg. Appl. {\bf 402}, 367-389, 2005.

\bibitem{arefi1}Arefijamaal, A.  and Arabyani Neyshaburi, F.,  Some properties of  dual and approximate dual of fusion frames,  Turkish J. Math. \textbf{41}, 1191-1203, 2017.
\bibitem{As} Asgari, M. S.,  New characterizations of fusion frames (frames of subspaces),   Proc. Indian Acad. Sci. Math. Sci . \textbf{119}(3), 369--382, 2009.

\bibitem{Asgari} Asgari, M. S.,  On the Riesz fusion bases in Hilbert spaces,   J. Egyption  Math. Soc. \textbf{21}(2), 79-86, 2013.





\bibitem{Bemros}  Bemrose, T.,   Casazza, P. G.,  Grochenig., Lammers, M. C. and Lynch, R. G.  Weaving Hilbert space frames, Operators and Matrices. {\bf 10}(4), 1093-1116, 2016.





\bibitem{Cas04}  Casazza, P. G. and  Kutyniok, G.  frames of subspaces,
Contemp. Math. {\bf 345}, 87-114, 2004.

\bibitem{Cas08}  Casazza, P. G.,  Kutyniok, G. and Li, S.  Fusion frames and distributed processing, Appl. Comput. Harmon. Anal.
{\bf 25} (1), 114-132, 2008.
\bibitem{lynch}  Casazza, P. G.,  Lynch, R. G., Weaving properties of Hilbert space frames, In: Proceedings of the SampTA.
 110-114, 2015.


\bibitem{Chr08} Christensen,  O.  frames and Bases: An Introductory Course,
Birkh\"{a}user, Boston. 2008.



\bibitem {Deepshika0}  Deepshikha., Vashisht, L. K., Verma, G.,  Generalized  weaving  frames for operators in Hilbert spaces, Result. Math.  to appear.
\bibitem {Deepshika}  Deepshikha., Garg, S.,  Vashisht, L. K., Verma, G.,  On weaving fusion frames for Hilbert spaces, Proc. SampTA., 381-385, 2017.


\bibitem {Ding}  Ding, J., On the perturbation of the reduced minimum modulus of bounded linear operators,
Appl. math. Comput, {\bf 140}, 69-75, 2003.


\bibitem {Gav07}  G$\breve{\textrm{a}}$vru\c{t}a, P. On the duality of fusion frames, J. Math. Anal.
Appl. {\bf 333} (2), 871-879, 2007.



\bibitem {Hei15} Heineken,  S. B.,  Morillas,  P. M.,  Properties of finite dual fusion frames,
Linear Algebra Appl. \textbf{453} (2014), 1-27.



 \bibitem{Hei14}
 Heineken,  S. B.,  Morillas,  P. M.,  Benavente, A. M., and Zakowicz, M. I.,
Dual fusion frames, Arch. Math. \textbf{103} (2014), 355-365.





\bibitem{sensor}
 Iyengar, S. S.,   Brooks, Eds, R. R.,
Distributed sensor networks, Chapman, Boston Rouge, La, USA, 2005.

\bibitem{Li} Li, X. B., Yang, S. Z., and Zhu, Y. C.,
Some results about operator perturbation of fusion frames in Hilbert spaces, J.  Math. Anal. Appl. \textbf{421}, 1417--1427, 2014.



 \bibitem{hear}
 Rozell,  C. J., Jahnson, D. H. , Analysing the robustness of redundant population codes in sensory and feature extraction systems, Nurocomputing. \textbf{69} (2006), 1215-1218.

\bibitem{Ruiz}  Ruiz, M. A. and  Stojanoff, D.  Some properties of frames of subspaces obtained by  operator theory methods, J. Math. Anal. Appl. {\bf 343}, 366-378, 2008.

\bibitem {Vashisht}   Vashisht, L. K., Deepshikha., Garg, S., Daus, P. K.,  On genralized weaving frames of Hilbert spaces, Rocky Mountain J. Math, to appear.


\end{thebibliography}

\end{document}